\def\spam{\mathop{\rm span}\nolimits}
\def\diag{\mathop{\rm diag}\nolimits}
\def\GL{\mathop{\rm GL}\nolimits}
\def\trace{\mathop{\rm trace}\nolimits}
\def\rank{\mathop{\rm rank}\nolimits}
\def\pmat#1{\begin{pmatrix}#1\end{pmatrix}}
\def\question#1{{\bf Question: }#1}
\def\question#1{}
\def\cF{{\cal F}}
\def\CC{\mathbb{C}}
\def\ZZ{\mathbb{Z}}
\def\Cd{\C^d}
\def\C{\mathbb{C}}
\newtheorem{theorem}{Theorem}[section]
\newtheorem{lemma}{Lemma}[section]
\newtheorem{example}{Example}[section]
\newtheorem{proposition}{Proposition}[section]
\newtheorem{definition}{Definition}[section]
\newenvironment{proof}{{\noindent \it
Proof.}}{\hfill$\Box$\medskip}
\newif\ifdraft\def\draft{\drafttrue\hoffset=.8truecm\showlabeltrue
\def\comment##1{{\bf comment: ##1}}
\headline={\sevenrm \hfill \ifx\filenamed\undefined\jobname\else\filenamed\fi%
(.tex) (as of \ifx\updated\undefined???\else\updated\fi)
 \TeX'ed at {\hour\time\divide\hour by 60{}%
\minutes\hour\multiply\minutes by 60{}%
\advance\time by -\minutes
\the\hour:\ifnum\time<10{}0\fi\the\time\  on \today\hfill}}
}
\def\inpro#1{\langle#1\rangle}
\def\ip#1{\langle\kern-.28em\langle#1\rangle\kern-.28em\rangle_\nu}
\def\cU{{\cal U}}
\def\norm#1{\Vert#1\Vert}
\def\openR{{{\rm I}\kern-.16em {\rm R}}}
\let\ga\alpha
\let\gb\beta
\let\gd\delta
\let\gl\lambda
\let\gL\Lambda
\let\gs\sigma
\let\ga\alpha
\let\gb\beta
\let\gd\delta
\let\gs\sigma
\def\inpro#1{\langle#1\rangle}
\def\Iff{\hskip1em\Longleftrightarrow\hskip1em}
\def\Implies{\hskip1em\Longrightarrow\hskip1em}
\def\formeq{\the\sectionno.\the\equationno}  
\def\elabel#1/#2/#3/{\global\advance\equationno by 1 %
\ifx#1\empty\else\emember#1%
\ifshowlabel\marginal{\string#1}\fi\fi%
\ifmmode\eqno{#3(\formeq#2)}\else#3\formeq#2\fi} 
\def\makeblanksquare#1#2{
\dimen0=#1pt\advance\dimen0 by -#2pt
      \vrule height#1pt width#2pt depth0pt\kern-#2pt
      \vrule height#1pt width#1pt depth-\dimen0 \kern-#1pt
      \vrule height#2pt width#1pt depth0pt \kern-#2pt
      \vrule height#1pt width#2pt depth0pt
}
\title{\bf 
The Fourier transform \\
of a projective group frame
}
\author{Shayne Waldron\\
 \\
Department of Mathematics \\ University of Auckland\\
Private
Bag 92019, Auckland, New Zealand\\
e--mail: waldron@math.auckland.ac.nz}
\date{\today}
\begin{document}

\maketitle 

\begin{abstract}
Many tight frames of interest are constructed via their Gramian matrix
(which determines the frame up to unitary equivalence).
Given such a Gramian, it can be determined whether or not the tight
frame is projective group frame, i.e., is the projective orbit of some 
group $G$ (which may not be unique). 
On the other hand, there is complete description of the projective group
frames in terms of the irreducible projective representations of $G$.
Here we consider the inverse problem of taking the Gramian of a projective
group frame for a group $G$, and identifying the cocycle 
and constructing the frame explicitly as 
the projective group orbit of a vector $v$ (decomposed in terms of
the irreducibles). The key idea is to recognise that the Gramian
is a group matrix given by a vector $f\in\CC^G$, and 
to take the Fourier transform of $f$ to obtain
the 
components of $v$ as orthogonal projections.

This requires the development of a theory of
group matrices and the Fourier transform for projective representations.
Of particular interest, we give a block diagonalisation of (projective) group matrices.
This leads to a unique Fourier decomposition of the group matrices, 
and a further fine-scale decomposition into low rank group matrices.
\end{abstract}

\bigskip
\vfill

\noindent {\bf Key Words:}
$(G,\alpha)$-frame,
Group matrix,
Gramian matrix,
projective group frame,
twisted group frame,
central group frame,
Fourier transform,
character theory,
Schur multiplier,
projective representation.

\bigskip
\noindent {\bf AMS (MOS) Subject Classifications:} 
primary
20C15, \ifdraft	Ordinary representations and characters \else\fi
20C25, \ifdraft	Projective representations and multipliers \else\fi
42C15, \ifdraft General harmonic expansions, frames  \else\fi
43A32, \ifdraft	Other transforms and operators of Fourier type \else\fi
65T50, \ifdraft	Discrete and fast Fourier transforms \else\fi

\quad
secondary
05B30, \ifdraft (Other designs, configurations) \else\fi
42C40, \ifdraft (Wavelets) \else\fi
43A30, \ifdraft	Fourier and Fourier-Stieltjes transforms on nonabelian groups and on semigroups, etc. \else\fi
94A12. \ifdraft (Signal theory [characterization, reconstruction, etc.]) \else\fi

\vskip .5 truecm
\hrule
\newpage

\section{Introduction}

Let $G$ be a finite abstract group. A map
$\rho:G\to\GL(\Cd)$ is said to be 
a {\bf projective representation} of $G$
of dimension $d=d_\rho$ with {\bf cocycle} (or {\bf multiplier})
$\ga:G\times G\to\CC$ if 
\begin{equation}
\label{prorepeqn}
\rho(g)\rho(h) = \ga(g,h) \rho(gh), \qquad \forall g,h\in G.
\end{equation}
Two projective representations $\rho,\ga$ and $\tilde\rho,\tilde\ga$
are {\bf equivalent} if there is a map $c:G\to\CC$ 
with $\tilde\rho = c T\rho T^{-1}$. 
The theory of projective representations follows that of {\it ordinary}
representations (when $\ga=1$), and will be introduced as needed.
The key point for now, is that for certain groups there are representations 
which are not ordinary representations (the possible $\ga$ are indexed
by the Schur multiplier group).

For a nonzero vector $v\in\Cd$ and a projective unitary representation
$\rho,\ga$, we define a {\bf projective group frame} 
or {\bf $(G,\ga)$-frame} to be the 
projective $G$-orbit of $v$, i.e.,
the sequence of vectors
$$ (\phi_g)=(gv)=(\rho(g)v)_{g\in G}, \qquad g v:=\rho(g)v. $$
By (\ref{prorepeqn}),
this satisfies
\begin{equation}
\label{Galphasymmeq}
g\phi_h =\rho(g)\rho(h)v = \ga(g,h)\phi_{gh},
\end{equation}
and if the representation is unitary, i.e., $\rho(g)^*=\rho(g)^{-1}$, then
\begin{equation}
\label{projGramian}
\inpro{\phi_h,\phi_g} = \inpro{\rho(h)v,\rho(g)v} 
= \inpro{\rho(g)^{-1}\rho(h)v,v}
= \inpro{ {\rho(g^{-1}h)v\over\ga(g,g^{-1}h)},v}.
\end{equation}
These 
generalise {\it group frames} (also known as {\it $G$-frames}),
which are the case when $\ga=1$.
A sequence of vectors $(v_j)$ in $\Cd$ is a ({\bf normalised}) {\bf tight frame} for $\Cd$ 
if it satisfies 
\begin{equation}
\label{normalisedtfdefn}
f = \sum_{j=1}^n \inpro{f,v_j}v_j, \qquad\forall f\in\Cd.
\end{equation}
The term {\it Parseval frame} is also commonly used. 
Normalised tight $G$-frames, which are natural generalisations of orthonormal bases, 
have numerous applications \cite{CK13}, \cite{BS11}, \cite{FJMP15} \cite{TH17},
and their structure is
now well understood 
\cite{W18}. 

On the other hand, tight $(G,\ga)$-frames are
of great interest also \cite{HL00}, \cite{GH03}. These
are sometimes referred to as {\it projective frame representations}
or {\it group-like systems}. As an example, {\it SICs }
(sets of $d^2$ equiangular lines
in $\Cd$) 
come as a tight projective group frame for $\ZZ_d\times\ZZ_d$ \cite{ACFW17},
as do {\it MUBs} ({\it mutually unbiased bases}) \cite{WF89}, \cite{GR09}, 
and many real and complex {\it spherical $t$-designs} \cite{GKMS16}, \cite{W17}.

Until recently, such tight frames have been studied by considering them
as $G$-frames for a larger group for which $\rho(G)$ contains scalar matrices,
and by accounting for the scalar multiples of a given vector. 
One such approach is to consider a {\it canonical abstract error group}
(the enlarged group)
with {\it index group} $G$ \cite{CW17}.
Recent calculations, most notably  \cite{CH18} (which uses the term
{\it twisted} group frame), suggest that they can be viewed as 
projective group frames, with the theory of group frames
following with little extra work (a twist if you like). 
Our contribution to this burgeoning
theory of projective group frames is to study them via their 
{\bf Gramian} matrix
$V^*V\in\CC^{G\times G}$, $V=[\phi_g]_{g\in G}$, 
which has $(g,h)$-entry given by (\ref{projGramian}).
The key points and results are the following:
\begin{itemize}
\item
The Gramian determines a projective group frame up to unitary equivalence.
If the representation $\rho$ is unitary (as must be the case for a tight frame), then the
Gramian is 
a $(G,\ga)$-matrix 
(see definition \ref{Galphaframedef}).
\item
Each $(G,\ga)$-matrix is determined by a $\nu\in\CC^G$, and they form a
$C^*$-algebra $M_{(G,\ga)}$.
\item
The tight $(G,\ga)$-frames correspond to the orthogonal projections
in $M_{(G,\ga)}$.
\item Each tight $(G,\ga)$-frame $(gv)_{g\in G}$ can be decomposed 
$v=\oplus_j v_j$, $v_j\in V_j$, where the action of $\rho$ on 
$V_j$ is irreducible.
\item Given the Gramian of tight projective group frame for $G$,
i.e., a $(G,\ga)$-matrix $P$ which is an orthogonal projection, 
we find a vector $v=\oplus_j v_j$ and representations $\rho_j \in R$,
where $R$ is a complete set of irreducible representations for $\ga$, 
such that the projective group frame $(\sum_j\rho_j(v_j))_{g\in G}$ 
has Gramian $P$. In other words, we give a concrete construction of a
projective group frame (known only from its Gramian) 
in terms of the irreducibles involved
(Theorem \ref{constructthm}). 
\item The construction above relies on a simultaneous unitary block 
diagonalisation of the $(G,\ga)$-matrices (Theorem \ref{Galphamatdiag}), 
related to
an appropriately defined Fourier transform for projective representations
of a finite group. The blocks preserve the spectral structure of the 
$(G,\ga)$-matrix. In particular, the tight $(G,\ga)$-frames correspond
to the case when all of the blocks are orthogonal projections.
This characterisation of the tight $(G,\ga)$-frames, gives the usual 
characterisation (in terms of the decomposition of $v$ into irreducibles),
and gives a natural description of the {\it central $(G,\ga)$-frames}
(as those where the blocks are $0$ or $I$).
The decomposition of $(G,\ga)$-matrices given by our Fourier transform
is of independent interest, e.g., it allows the determinant of a general
$(G,\ga)$-matrix to be factored. There is also a fine-scale decomposition 
into low rank $(G,\ga)$-matrices, which is not unique. 
\end{itemize}

We now proceed, following the above outline.

\section{The Schur multiplier}

We first consider the functions $\ga:G\times G\to\CC$ which can be
cocycles of a projective representation of $G$. 
Given (\ref{prorepeqn}), multiplying out
$\{\rho(x)\rho(y)\}\rho(z) = \rho(x)\{\rho(y) \rho(z)\}$, 
leads to 
the following multiplication rule for cocycles
\begin{equation}
\label{alpharule}
\ga(x,y) \ga(xy,z) = \ga(x,yz) \ga(y,z).
\end{equation}
Every $\ga$ satisfying (\ref{alpharule}) does come from a projective
representation. Indeed, with $(e_g)_{g\in G}$ being the standard basis vectors
for $\CC^G$, we can define $\rho:G\to\GL(\CC^{G})$ by
\begin{equation}
\label{alpharegrep}
\rho(g) e_h :=\ga(g,h)e_{gh},
\end{equation}
and use (\ref{alpharule}) to verify that it is such a representation:
\begin{align*} \rho(g)\rho(h)e_k 
&= \rho(g)\ga(h,k)e_{hk} 
= \ga(h,k)\ga(g,hk)e_{ghk}  \cr
&= \ga(g,h)\ga(gh,k)e_{ghk} 
= \ga(g,h)\rho(gh)e_{k} .
\end{align*}
\vfil\eject

Henceforth a map $\ga:G\times G\to\CC$ satisfying 
(\ref{alpharule}) will be called a {\bf cocycle} 
(more properly a {\bf $2$-cocycle}) or {\bf multiplier} of $G$. 
The set of cocycles is an abelian group under pointwise multiplication,
which is commonly denoted by $Z^2(G,\CC^\times)$. 
If projective representations $\rho,\ga$ and $\tilde\rho,\tilde\ga$ are
are {\bf equivalent}, i.e., $\tilde\rho = c T\rho T^{-1}$,
where $c:G\to\CC$, then 
$$ \tilde\rho(g)\tilde\rho(h)
=c_g T\rho(g)T^{-1}c_h T\rho(h) T^{-1} 
=c_g c_h T \ga(g,h) \rho(gh) T^{-1}
= { c_g c_h\over c_{gh}} \ga(g,h) \tilde\rho(gh), $$
so that
$$ \tilde \ga(g,h) = \gb(g,h) \ga(g,h), \qquad
\gb(g,h) :=  { c_g c_h\over c_{gh}}. $$
The function $\gb$ above is a cocycle, called a {\bf coboundary}
(or {$2$-coboundary}), since
$$\gb(x,y)\gb(xy,z) 
= {c_xc_y\over c_{xy}} {c_{xy} c_{z}\over c_{xyz}}
= {c_xc_yc_z\over c_{xyz}}
= {c_xc_{yz}\over c_{xyz}} {c_yc_z\over c_{yz}}
= \gb(x,yz)\gb(y,z). $$
The coboundaries form a subgroup $B^2(G,\CC^\times)$ of $Z^2(G,\CC^\times)$. 
The quotient group 
$$ M(G)=H^2(G,\CC^\times):=Z^2(G,\CC^\times)/B^2(G,\CC^\times)$$
is called the {\bf Schur multiplier} (or {\bf second homology group}
$H_2(G,\ZZ)$ of $G$). The Schur multiplier is a finite abelian group 
whose exponent divides the order of $G$. If $G$ has a nontrivial cyclic 
Sylow $p$-subgroup, then $p$ does not divide $|M(G)|$. There do exist 
finite groups with nontrivial Schur multipliers, and hence 
projective representations which are not ordinary. The first few cases
of nontrivial Schur multipliers are for certain groups of order $4,8,9,12$. 

Every projective representation $\rho,\ga$ is equivalent to one 
$\tilde\rho,\tilde\ga$, where the $\tilde\rho$ is unitary. 
For a unitary representation, 
$|\det(\rho(g))|=1$, and so by
taking determinants of (\ref{prorepeqn}), we have
\begin{equation}
\label{unitarycocycle}
|\ga(g,h)| = 1, \qquad g,h\in G.
\end{equation}
Consequently, a cocycle satisfying (\ref{unitarycocycle}) 
is said to be a {\bf unitary}
cocycle. For the purpose of defining the Schur multiplier, one 
can suppose that all of the cocycles are unitary, and satisfy the 
normalisation condition that $\ga(1,1)=1$.

We now list some 
properties of cocycles that we will often use.
Since 
$$\rho(1)^2=\ga(1,1)\rho(1),
\qquad
\rho(g)\rho(g^{-1}) = \ga(g,g^{-1}) \rho(1), $$
we have
$\rho(1)=\ga(1,1)I$, and
\begin{equation}
\label{rhoinverse}
\rho(g)^{-1} = {\rho(g^{-1})\over\ga(g,g^{-1})\ga(1,1)}.
\end{equation}
Similarly, 
$\rho(g)\rho(1)=\ga(g,1)\rho(g)$ and $\rho(1)\rho(g)=\ga(1,g)\rho(g)$, 
give
\begin{equation}
\label{alphag1}
\ga(1,g)=\ga(g,1)=\ga(1,1), \quad
\ga(g,g^{-1})=\ga(g^{-1},g), \qquad\forall g\in G.
\end{equation}

%


\section{The $C^*$-algebra of $(G,\ga)$-matrices}

Motivated by the formula (\ref{projGramian}) for the Gramian 
of a $(G,\ga)$-frame, we have: 

\begin{definition}
\label{Galphaframedef}
We say that $A=[a_{g,h}]_{g,h\in G}\in\CC^{G\times G}$ is a {\bf $(G,\ga)$-matrix} if
\begin{equation}
\label{Malphanuformula}
a_{g,h} = M_\ga(\nu)_{g,h}:= {\nu(g^{-1}h)\over\ga(g,g^{-1}h)},
\end{equation}
for some $\nu:G\to\CC$. We denote the set
of $(G,\ga)$-matrices by $M_{(G,\ga)}$. 
\end{definition}
Other variations 
are discussed in \S\ref{otherGgamats},
e.g., \cite{CH18} consider the matrices
 $M(\nu)=M_{1/\ga}(\nu)$.

Given a $(G,\ga)$-matrix $A\in M_{(G,\ga)}$, the cocycle $\ga$
(or part of it) can be recovered via
$$ {\ga(g,h)\over\ga(1,1)} 
= {a_{1,h}\over a_{g,gh}} 
= {\nu(h)\over\ga(1,h)}{\ga(g,h)\over\nu(h)}, \qquad \nu(h)\ne0, $$
$$ {\ga(g,h)\over\ga(1,1)} 
= {a_{gh,g}\over a_{h,1}}
= {\nu(h^{-1})\over\ga(gh,h^{-1})}{\ga(h,h^{-1})\over\nu(h^{-1})},
\qquad \nu(h^{-1})\ne0. $$

From now on, we let $n=|G|$, and refer to matrices indexed by the
elements of $G$ as
$n\times n$ matrices or $G\times G$ matrices.

\begin{lemma}
The $(G,\ga)$-matrices are an $n$--dimensional subspace of the
$n\times n$ matrices which is closed under matrix multiplication, i.e.,
$$ M_\ga( a\nu+b\mu) = aM_\ga(\nu)+b M_\ga(\mu), \qquad a,b\in\CC,$$
\begin{equation}
\label{Malphaconv}
M_\ga(\nu)M_\ga(\mu)=M_\ga(\nu*_\ga \mu),
\qquad (\nu*_\ga \mu)(g) := \sum_{t\in G} 
{ \nu(g t^{-1})\mu(t) \over \ga(gt^{-1},t)}.
\end{equation}
When $\ga$ is unitary, i.e., $|\ga|=1$, then they
are also closed under the Hermitian transpose 
\begin{equation}
\label{Malphahermtrans}
(M_\ga(\nu))^* = M_\ga(\nu^{*,\ga}), \qquad
\nu^{*,\ga}(a) := \overline{\nu(a^{-1})}\ga(a,a^{-1})\ga(1,1).
\end{equation}
\end{lemma}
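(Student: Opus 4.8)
The plan is to verify the three assertions in turn, each ultimately reducing to a single instance of the cocycle identity (\ref{alpharule}). Linearity of $\nu\mapsto M_\ga(\nu)$ is immediate from (\ref{Malphanuformula}): for fixed $g,h$ the entry $\nu(g^{-1}h)/\ga(g,g^{-1}h)$ depends linearly on $\nu$, the cocycle values being fixed nonzero scalars. To obtain the dimension $n$, I would show the map is injective by evaluating the $(1,h)$-entry, $M_\ga(\nu)_{1,h}=\nu(h)/\ga(1,h)$; since $\ga(1,h)=\ga(1,1)\ne0$ by (\ref{alphag1}), $M_\ga(\nu)=0$ forces $\nu\equiv0$. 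Hence $\nu\mapsto M_\ga(\nu)$ is a linear isomorphism of $\CC^G$ onto $M_{(G,\ga)}$, which is therefore an $n$-dimensional subspace.

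For closure under multiplication, I would compute the product entrywise,
$$(M_\ga(\nu)M_\ga(\mu))_{g,h} = \sum_{k\in G}\frac{\nu(g^{-1}k)}{\ga(g,g^{-1}k)}\frac{\mu(k^{-1}h)}{\ga(k,k^{-1}h)},$$
and reindex. Writing $w:=g^{-1}h$ and substituting first $s=g^{-1}k$ and then $t=s^{-1}w$ turns the sum into $\sum_{t\in G}\nu(wt^{-1})\mu(t)/\bigl(\ga(g,wt^{-1})\ga(gwt^{-1},t)\bigr)$. Comparing with the claimed value $M_\ga(\nu*_\ga\mu)_{g,h}=\ga(g,w)^{-1}(\nu*_\ga\mu)(w)$, the identity reduces to the equality of cocycle factors $\ga(g,wt^{-1})\ga(gwt^{-1},t)=\ga(g,w)\ga(wt^{-1},t)$, which is precisely (\ref{alpharule}) with $(x,y,z)=(g,wt^{-1},t)$.

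For the Hermitian transpose in the unitary case, I would start from $(M_\ga(\nu))^*_{g,h}=\overline{M_\ga(\nu)_{h,g}}=\overline{\nu(h^{-1}g)}/\overline{\ga(h,h^{-1}g)}$ and use $|\ga|=1$ to replace $\overline{\ga(h,h^{-1}g)}$ by $\ga(h,h^{-1}g)^{-1}$, giving $\overline{\nu(h^{-1}g)}\,\ga(h,h^{-1}g)$. On the other hand $M_\ga(\nu^{*,\ga})_{g,h}=\overline{\nu(h^{-1}g)}\,\ga(g^{-1}h,h^{-1}g)\ga(1,1)/\ga(g,g^{-1}h)$, so after cancelling $\overline{\nu(h^{-1}g)}$ I need $\ga(g,g^{-1}h)\ga(h,h^{-1}g)=\ga(1,1)\ga(g^{-1}h,h^{-1}g)$. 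With $w=g^{-1}h$ (so $gw=h$ and $ww^{-1}=1$) this is exactly (\ref{alpharule}) for $(x,y,z)=(g,w,w^{-1})$, combined with $\ga(g,1)=\ga(1,1)$ from (\ref{alphag1}).

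I expect the only genuine obstacle to be the bookkeeping in the multiplication step: choosing the substitutions so that the summand matches the twisted convolution $\nu*_\ga\mu$, and isolating precisely the cocycle factor that (\ref{alpharule}) supplies. The remaining steps are direct once the correct instance of the cocycle identity has been pinned down, and the unitary hypothesis $|\ga|=1$ enters only to convert conjugated cocycle values into reciprocals.
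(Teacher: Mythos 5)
Your proposal is correct and follows essentially the same route as the paper: a direct entrywise computation in which the product and adjoint each reduce to a single instance of the cocycle identity (\ref{alpharule}) (your instance $(x,y,z)=(g,wt^{-1},t)$ is exactly the paper's identity $\ga(g,g^{-1}a)\ga(a,a^{-1}h)=\ga(g,g^{-1}h)\ga(g^{-1}a,a^{-1}h)$ after the substitution $t=a^{-1}h$, and your adjoint computation is the mirror image of the paper's, bypassing its extra appeal to $\ga(a,a^{-1})=\ga(a^{-1},a)$). The injectivity argument via the $(1,h)$-entry is a small addition the paper leaves implicit, and it is fine.
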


\begin{proof}
Since
$\ga(g,g^{-1}a)\ga(a,a^{-1}h) = \ga(g,g^{-1}h) \ga(g^{-1}a,a^{-1}h)$,
we have
\begin{align*}
\bigl(M_\ga(\nu)M_\ga(\mu)\bigr)_{g,h}
&= \sum_{a\in G} M_\ga(\nu)_{g,a} M_\ga(\mu)_{a,h}
= \sum_{a\in G} {\nu(g^{-1}a)\over\ga(g,g^{-1}a)} {\mu(a^{-1}h)\over\ga(a,a^{-1}h)} \cr
&= {1\over \ga(g,g^{-1}h)} \sum_{a\in G} 
{\nu(g^{-1}h (a^{-1}h)^{-1})\mu(a^{-1}h)\over\ga(g^{-1}h (a^{-1}h)^{-1},a^{-1}h)}
= {(\nu*_\ga \mu)(g^{-1}h)\over \ga(g,g^{-1}h)}.
\end{align*}
Since
$\ga(h,h^{-1}g)\ga(g,g^{-1}h) = \ga(h,1)\ga(h^{-1}g,g^{-1}h),$
we have
\begin{align*}
(M_\ga(\nu)^*)_{g,h} & = \overline{M_\ga(\nu)_{h,g}}
=\overline{\nu(h^{-1}g)}\ga(h,h^{-1}g) \cr
&= { \overline{\nu((g^{-1}h)^{-1})}\ga(g^{-1}h,(g^{-1}h)^{-1})\ga(1,1)
\over\ga(g,g^{-1}h)} =M_\ga(\nu^{*,\ga})_{g,h}.
\end{align*}
We will call $\nu *_\ga \mu$ the {\bf $\ga$-convolution} (of $\nu$ and $\mu$).
\end{proof}

\begin{example} A calculation gives
$$ e_g *_\ga e_h = {e_{gh}\over\ga(g,h)}, $$
so that
$$ 
M_\ga(e_g) M_\ga(e_h) = 
{M_\ga( e_{gh})\over \ga(g,h) } . $$
In particular, taking $g=h=1$, gives $I=\ga(1,1)M_\ga(e_1)$, 
so the identity matrix is a $(G,\ga)$-matrix, hence
(by Cayley-Hamilton) the inverse of a nonsingular $(G,\ga)$-matrix 
is a $(G,\ga)$-matrix. Further, for $\ga$ unitary, the limit formulas
$$ A^+
=\lim_{\gd\to0^+} (A^*A+\gd I)^{-1} A^*
=\lim_{\gd\to0^+} A^*(AA^*+\gd I)^{-1} $$
for the pseudoinverse, shows that $M_{(G,\ga)}$ is 
closed under the pseudoinverse.
\end{example}

\begin{example}
For $\ga$ unitary, the matrix $P=M_\ga(\nu)$ is an orthogonal projection, 
i.e., satisfies $P^2=P$, $P^*=P$, if and only if
$\nu*_\ga \nu =\nu$ and $\nu^{*,\ga} = \nu$.
\end{example}

\begin{example}
\label{starrhojk}
For $\rho$ (and hence $\ga$) unitary, we have $\rho_{jk}^{*,\ga}=\rho_{kj}$, 
$1\le j,k\le d_\rho$, i.e.,
$$ M_\ga(\rho_{jk})^* = M_\ga(\rho_{kj}). $$
This follows from (\ref{rhoinverse}),
by taking the $(k,j)$-entry of 
$ \rho(g) = \ga(g,g^{-1})\ga(1,1) \rho(g^{-1})^* $.
\end{example}




\section{The twisted group algebra}

We now describe how the basic theory of 
representations extends to the projective case. 
Let $\rho:G\to\GL(V)$ be a projective representation of $G$ for a cocycle $\ga$
on $V$.
This gives a ``projective group action'' of $G$ on the $\CC$-vector space $V$
\begin{equation}
\label{modulemult}
g\cdot v = gv := \rho(g)v, \qquad\forall v\in V.
\end{equation}
This is not a {\it group action} in the usual sense, since
$$ g\cdot (h\cdot v) = \ga(g,h) (gh\cdot v). $$
Nevertheless, we will talk about $G$-invariant subspaces of $V$, etc.
As in the ordinary case, we say that a 
representation $\rho,\ga$ on $V$ is {\bf irreducible} if
the only $G$-invariant subspaces of $V$ are $0$ and $V$, i.e., for
any nonzero vector $v$ the $G$-orbit $(g\cdot v)_{g\in G}$ spans $V$.

Let $\CC G$ denote the set of formal $\CC$-linear combinations of the
elements of $G$. This becomes are ring, which we denote by $(\CC G)_\ga$,
under the multiplication
$$ g\cdot_\ga h := \ga(g,h)gh, $$
extended linearly. We note that the cocycle multiplication
rule (\ref{alpharule}) is equivalent to the associativity of the 
multiplication, since
\begin{align*}
(g\cdot_\ga h)\cdot_\ga k
&=\ga(g,h) (gh\cdot_\ga k) =\ga(g,h)\ga(gh,k) ghk \cr
g\cdot_\ga (h\cdot_\ga k)
&= g\cdot_\ga (\ga(h,k) hk)
= \ga(h,k) \ga(g,hk) ghk.
\end{align*}
Moreover, $(\CC G)_\ga$ is an algebra, which generalises the 
group algebra $\CC G$ (the case $\ga=1$), and is called
the {\bf $\ga$-twisted group algebra} (over $\CC$).
The vector space $V$ becomes an $(\CC G)_\ga$-module
under the operation $(\CC G)_\ga\times V\to V$ 
given by extending (\ref{modulemult}) linearly.
Conversely, if $V$ is a $(\CC G)_\ga$-module, then
$$ \rho(g)v := g\cdot v, \qquad g\in G,\ v\in V, $$
defines a projective representation $\rho:G\to\GL(V)$
for 
$\ga$.
In the language of modules:
$$\hbox{The $G$-invariant subspaces $V$ are
precisely the $(\CC G)_\ga$-submodules of $V$.} $$
Since $(\CC G)_\ga$ is semisimple, it follows that if
$\rho,\ga$ is a projective representation on $V$, then
$V$ decomposes as direct sum $V=\oplus_j V_j$ of 
irreducible $(\CC G)_\ga$-submodules, i.e., each $\rho|_{V_j}$
is an irreducible projective representation for the cocycle $\ga$. 
When $\rho$ is unitary, the decomposition of $V$ 
can be taken to be an orthogonal direct sum.
There are finitely many such irreducible
representations of $G$ up to equivalence, which we now describe.

The representation $\rho:G\to\GL(\CC^G)$ for $\ga$ given by (\ref{alpharegrep})
generalises the (left) regular (ordinary) representation.
It can also be thought of as a representation on $\CC G$, via the 
indentification of $\CC^G$ with $\CC G$, i.e., $\rho(g) h:=\ga(g,h)gh$.
We will call it the {\bf regular $\ga$-representation for $G$}. As in the 
ordinary case, the regular $\ga$-representation decomposes as a direct
sum of irreducible representations (for the fixed cocycle $\ga$), 
with each irreducible occuring with multiplicity given by its dimension.
In particular, if $R=R_\ga$ is a 
{\bf complete set of irreducible representations}
i.e., each representation occurs exactly once in $R$ up to 
equivalence, then we have
\begin{equation}
\label{nequalsumdsquare}
n =|G|=\dim(\CC^G) =\sum_{\rho\in R} d_\rho^2.
\end{equation}
We 
write $[\rho]$ for the equivalence class of $\rho$ 
(where $\ga$ is fixed)
and $\rho\approx\xi$ for the equivalence,
e.g., $\chi_\rho=\chi_{[\rho]}$ means that $\chi_\rho$ depends only on
$\rho$ up to equivalence. Schur's lemma extends 
(for $(\CC G)_\ga$-homomorphisms between irreducibles), and from it
(see \cite{CH18}), one has

\begin{theorem} Fix $\ga$. If $\rho$ and $\xi$ are irreducible projective 
representations of $G$ 
on $V_1$ and $V_2$, then
$$
 {1\over|G|} \sum_{g\in G} \xi(g)^{-1} L \rho(g)
= {1\over|G|} \sum_{g\in G} {\xi(g^{-1}) L \rho(g)\over\ga(g,g^{-1})\ga(1,1)}
= {\trace(L)\over\dim(V_1)}
\begin{cases}
0, & \rho\not\approx\xi; \cr
I, & \rho=\xi,
\end{cases} $$
for all linear maps $L:V_1\to V_2$.
In particular, if $\rho$ and $\xi$ map to matrices, then
\begin{equation}
\label{rhochiorthog}
\sum_{g\in G} {\xi_{\ell m}(g^{-1})\rho_{jk}(g)\over \ga(g,g^{-1})\ga(1,1)}
= {|G|\over d_\rho}
\begin{cases}
0, &  \rho\not\approx\xi; \cr
\gd_{j\ell}\gd_{km}, 
& \rho=\xi.
\end{cases}
\end{equation}
\end{theorem}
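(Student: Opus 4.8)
The plan is to recognise the common value of the two averages as an intertwining operator and then invoke the projective form of Schur's lemma already quoted. Write $T:=\frac{1}{|G|}\sum_{g\in G}\xi(g)^{-1}L\rho(g)$, a linear map $V_1\to V_2$. The two expressions in the statement coincide immediately: applying (\ref{rhoinverse}) to $\xi$ gives $\xi(g)^{-1}=\xi(g^{-1})/(\ga(g,g^{-1})\ga(1,1))$, which converts the first sum into the second. So it suffices to analyse $T$.

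The heart of the argument is to show that $T$ is a homomorphism of $(\CC G)_\ga$-modules, i.e.\ $\xi(h)T=T\rho(h)$ for every $h\in G$, equivalently $\xi(h)T\rho(h)^{-1}=T$. This is where the cocycle bookkeeping, the main obstacle, must work out. First I would apply (\ref{prorepeqn}) in the form $\xi(gh^{-1})\xi(h)=\ga(gh^{-1},h)\,\xi(g)$ to obtain $\xi(h)\xi(g)^{-1}=\ga(gh^{-1},h)\,\xi(gh^{-1})^{-1}$, and the same identity for $\rho$ to obtain $\rho(g)\rho(h)^{-1}=\ga(gh^{-1},h)^{-1}\rho(gh^{-1})$. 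Multiplying these, the two cocycle factors cancel exactly, so that
\[ \xi(h)\xi(g)^{-1}L\rho(g)\rho(h)^{-1}=\xi(gh^{-1})^{-1}L\rho(gh^{-1}). \]
Summing over $g$ and re-indexing by $s=gh^{-1}$ then yields $\xi(h)T\rho(h)^{-1}=T$, as required. The cancellation of the factors $\ga(gh^{-1},h)$ is precisely the point at which one uses that $\rho$ and $\xi$ carry the \emph{same} cocycle $\ga$; without this the averaging would not produce a genuine $(\CC G)_\ga$-intertwiner.

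With $T$ established as an intertwiner, the projective Schur's lemma finishes the operator identity. If $\rho\not\approx\xi$ then the only such $T$ is $0$. If $\rho=\xi$ (so $V_1=V_2$ and hence $d_\xi=d_\rho$), then $T=cI$ for a scalar $c$, and taking traces together with $\trace(\rho(g)^{-1}L\rho(g))=\trace(L)$ gives $\trace(T)=\trace(L)$, whence $c=\trace(L)/\dim(V_1)$. This is exactly the claimed value, including the case distinction.

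Finally, the entrywise relation (\ref{rhochiorthog}) is obtained by specialising $L$ to the elementary matrix units and reading off a single entry of the resulting scalar multiple of the identity: choosing the matrix unit positioned so that the general summand $(\xi(g)^{-1}L\rho(g))_{\ell k}$ becomes $\xi_{\ell m}(g^{-1})\rho_{jk}(g)/(\ga(g,g^{-1})\ga(1,1))$, the right-hand side becomes the corresponding entry of $\frac{\trace(L)}{d_\rho}I$, i.e.\ a product of Kronecker deltas. Clearing the factor $\frac{1}{|G|}$ then gives (\ref{rhochiorthog}). I expect no difficulty in this last step beyond careful index tracking.
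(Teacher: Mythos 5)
Your argument is correct and is exactly the route the paper intends: the paper offers no proof of its own beyond invoking the projective Schur's lemma (citing \cite{CH18}), and your averaging construction of the intertwiner $T=\frac{1}{|G|}\sum_g\xi(g)^{-1}L\rho(g)$, with the exact cancellation of the two factors $\ga(gh^{-1},h)$ coming from $\xi$ and $\rho$ carrying the same cocycle, is the standard way to carry that out. The identification of the two sums via (\ref{rhoinverse}), the trace evaluation $\trace(T)=\trace(L)$, and the case split are all fine.

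One concrete warning about the ``careful index tracking'' you defer at the end: carrying it out with $L=E_{mj}$ gives $(\xi(g)^{-1}E_{mj}\rho(g))_{\ell k}=\xi_{\ell m}(g^{-1})\rho_{jk}(g)/(\ga(g,g^{-1})\ga(1,1))$, and the $(\ell,k)$ entry of $\frac{\trace(E_{mj})}{d_\rho}I$ is $\frac{1}{d_\rho}\gd_{jm}\gd_{\ell k}$. So your derivation yields $\frac{|G|}{d_\rho}\gd_{jm}\gd_{k\ell}$ in the case $\rho=\xi$, \emph{not} the $\gd_{j\ell}\gd_{km}$ printed in (\ref{rhochiorthog}). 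The printed deltas are a misprint: they are inconsistent with (\ref{coordorth}), which the paper presents as a mere rewriting of (\ref{rhochiorthog}), and they fail on examples (for the $2$-dimensional representation of the Klein four-group one has $\sum_g|\rho_{12}(g)|^2=2\neq 0$, whereas $\gd_{j\ell}\gd_{km}=0$ for $j=1,k=2,\ell=2,m=1$). Your proof produces the correct statement; just be aware that it will not, and should not, reproduce the index arrangement as literally displayed.
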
 

When $\rho$ is unitary, 
by (\ref{rhoinverse}) we may write 
(\ref{rhochiorthog}) as
\begin{equation}
\label{coordorth}
\inpro{\rho_{jk},\xi_{m\ell}} = {|G|\over d_\rho}
\begin{cases}
0, &  \rho\not\approx\xi; \cr
\gd_{jm}\gd_{k\ell}, 
& \rho=\xi,
\end{cases}
\end{equation}
which we will refer to a the {\it orthogonality of coordinates} 
(of irreducible representations).
This leads to a character theory and Fourier transform for
projective representations.

\section{Character theory for projective representations}
\label{Chartheorysec}

Invariants of the equivalence class 
$$[\rho]=\{T\rho T^{-1}:\hbox{$T$ is invertible}\}$$ 
of a projective representation $\rho:G\to\GL(\CC^{d_\rho})$
include its {\bf $\ga$-character} $\chi_\rho=\chi_{[\rho]}\in\CC^G$
$$ \chi_\rho(g):= \trace(\rho(g))=
\hbox{$\sum_{j=1}^{d_\rho} \rho_{jj}(g)$}, \qquad
g\in G. $$
and the subspace of its coordinates 
$$ U_{\rho,\ga}=U_{[\rho],\ga} 
:= \spam\{\rho_{jk}:1\le j,k\le d_\rho\}. $$
It follows from 
(\ref{nequalsumdsquare}) and (\ref{rhochiorthog}) that
we have the orthogonal decomposition
\begin{equation}
\label{rhofunctorthog}
\CC^G = \bigoplus_{\rho\in R} U_{\rho,\ga}. 
\end{equation}
Since $\chi_\rho\in U_{\rho,\ga}$,  
the $\ga$-characters of inequivalent irreducibles 
are orthogonal. From
$$ \rho(1)=\ga(1,1)I, \qquad \rho(hgh^{-1}) 
= {\ga(hgh^{-1},h)\over\ga(h,g)} \rho(h)\rho(g)\rho(h)^{-1}, $$
it follows that the $\ga$-characters satisfy
\begin{equation}
\label{basicalphacharprops}
\chi_\rho(1)=\ga(1,1)d_\rho,
\qquad \chi_\rho(hgh^{-1}) = {\ga(hgh^{-1},h)\over\ga(h,g)}\chi_\rho(g).
\end{equation}
Furthermore, if $\rho$ is unitary, then (\ref{rhoinverse}) gives
\begin{equation}
\label{alphacharconj}
\chi_\rho(g^{-1}) = \ga(g,g^{-1})\ga(1,1)\overline{\chi_\rho(g)}.
\end{equation}

If $\rho:G\to\GL(V)$ is a representation on $V$, and $V=\oplus_j V_j$
is a decomposition into irreducibles, then
\begin{equation}
\label{chardecomp}
\chi_\rho = \sum_j \chi_{\rho|_{V_j}} = \sum_{\xi\in R} m_\xi \chi_\xi,
\end{equation}
where $m_\xi$ is the multiplicity of the irreducible 
$\xi$. Since the $\ga$-characters of inequivalent irreducibles
are orthogonal, we may determine $m_\xi$ from $\chi_\rho$ via
$$ \inpro{\chi_\rho,\chi_\xi} = m_\xi \inpro{\chi_\xi,\chi_\xi}
= m_\xi \hbox{${|G|\over d_\xi}$} d_\xi = m_\xi |G|. $$
Applying this to the regular $\ga$-represention (\ref{alpharegrep}) 
gives $m_\xi=d_\xi$,
and hence 
(\ref{nequalsumdsquare}).

Motivated by (\ref{basicalphacharprops}),
a function $f:G\to\CC$ is called an {\bf $\ga$-class function} if
$$ f(hgh^{-1}) = {\ga(hgh^{-1},h)\over\ga(h,g)}f(g), \qquad
\forall g,h\in G,$$
and $g\in G$ is a {\bf $\ga$-element} of $G$ if
$$ {\ga(hgh^{-1},h)\over\ga(h,g)}=1,\quad\forall h\in C_G(g) 
\Iff \ga(g,h)=\ga(h,g),\quad \forall h\in {\rm C}_G(g) . $$
It can be shown that the subspace of $\ga$-class functions (which contains
the $\ga$-characters) has an orthogonal basis given by the 
irreducible $\ga$-characters, and its dimension, i.e., the
number of irreducible projective representations (up to equivalence), 
equals the number of conjugacy classes of $G$ which contain a $\ga$-element
(see \cite{CH18} for details).


\section{Fourier analysis} 

Let $R=R_\ga$ be a complete set of inequivalent irreducible projective
representations of a finite group 
$G$ for a given cocycle $\ga$.
We define the {\bf $\ga$-Fourier transform} $F_\ga$ of $f:G\to\CC$
at $\rho\in R$ to be the linear 
map given by
\begin{equation}
\label{projFTdefn}
F_\ga(f)(\rho) 
=(F_\ga f)_\rho:= \sum_{a\in G}{ f(a)\rho(a^{-1})\over\ga(a,a^{-1})\ga(1,1)}.
\end{equation}
When $\rho$ is unitary, i.e., $\rho(a)^*=\rho(a)^{-1}$, this simplifies to 
\begin{equation}
\label{Falphaunitary}
(F_\ga f)_{\rho} = \sum_{a\in G}f(a)\rho(a)^{-1} 
= \sum_{a\in G}f(a)\rho(a)^*.
\end{equation}
We observe that the spectral structure of $(F_\ga f)_{\rho}$ depends only on
$\rho$ up to equivalence, since
\begin{equation}
\label{spectstructequiv}
(F_\ga f)_{T\rho T^{-1}}= T (F_\ga f)_{\rho} T^{-1}.
\end{equation}
To be able to compare with some presentations,
we also define the following variant
\begin{equation}
\label{FtoF1}
(\cF_\ga f)_\rho 
:= \sum_{a\in G}{ f(a)\rho(a)\over\ga(a,a^{-1})\ga(1,1)}
=  (F_\ga \tilde f)_\rho, \quad \tilde f(a):=f(a^{-1}). 
\end{equation}

\begin{example}
By (\ref{rhoinverse}),
the Fourier transforms of the standard basis vectors are
$$ (F_\ga e_g)_\rho = {\rho(g^{-1})\over\ga(g,g^{-1})\ga(1,1)} =\rho(g)^{-1},
\qquad
(\cF_\ga e_g)_\rho = {\rho(g)\over\ga(g,g^{-1})\ga(1,1)} =\rho(g^{-1})^{-1}.
$$
\end{example}

\noindent
We note that $f\mapsto(( F_\ga f)_\rho)_{\rho\in R}$ 
and $f\mapsto((\cF_\ga f)_\rho)_{\rho\in R}$ 
are linear maps
$\CC^G\to\oplus_{\rho} M_{d_\rho}(\CC)$ between
spaces of dimension $n=|G|=\sum_\rho d_\rho^2$.
The corresponding {\bf inverse $\ga$-Fourier transforms} 
at $A=(A_\rho)_{\rho\in R}$ 
are 
given by
\begin{equation}
\label{FtoF2}
(F_\ga^{-1} A)(a):={1\over|G|}\sum_\rho d_\rho \trace(A_\rho\rho(a)),
\qquad (\cF_\ga^{-1} A)(a):=(F_\ga^{-1} A)(a^{-1}). 
\end{equation}

These are inverses of each other, and we can extend the other basic results
of Fourier analysis (for when the multiplier is $\ga=1$), as follows.


\begin{theorem}
The $\ga$-Fourier transform and inverse $\ga$-Fourier transform 
are inverses of each other, and
\begin{equation}
\label{FTconvformula}
F_\ga(\nu) F_\ga(\mu) = F_\ga(\mu *_\ga \nu),
\qquad \cF_\ga(\nu) \cF_\ga(\mu) = \cF_\ga(\nu*_{\tilde\ga} \mu),
\end{equation}
where 
$ \tilde\ga(a,b):=\ga(b^{-1},a^{-1}). $
There is the Plancherel formula
\begin{equation}
\label{Planch2}
\sum_{a\in G} {\nu(a)\mu(a^{-1})\over\ga(a,a^{-1})\ga(1,1)}
= {1\over|G|} \sum_\rho d_\rho 
\trace\bigl((F_\ga \nu)_\rho (F_\ga \mu)_\rho\bigr),
\end{equation}
and when each $\rho\in R$ is unitary
\begin{equation}
\label{Planch1}
\inpro{\nu,\mu} := \sum_{a\in G} \nu(a)\overline{\mu(a)} 
= {1\over|G|} \sum_\rho d_\rho \inpro{(F_\ga\nu)_\rho,(F_\ga\mu)_\rho},
\end{equation}
where $\inpro{A,B}:=\trace(AB^*)$ is the Frobenius inner product on matrices.
\end{theorem}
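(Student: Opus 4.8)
The unifying observation that makes everything cheap is that, by (\ref{rhoinverse}), the defining formula (\ref{projFTdefn}) can be rewritten as $(F_\ga f)_\rho = \sum_{a\in G} f(a)\rho(a)^{-1}$ for \emph{every} $\rho$ (not merely the unitary ones), since $\rho(a^{-1})/(\ga(a,a^{-1})\ga(1,1)) = \rho(a)^{-1}$. I would establish the assertions in the order convolution, inversion, Plancherel, unitary Plancherel, each leaning on the previous.

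For the convolution formula (\ref{FTconvformula}) I would expand $(F_\ga\nu)_\rho(F_\ga\mu)_\rho = \sum_{a,b}\nu(a)\mu(b)\,\rho(a)^{-1}\rho(b)^{-1}$ and use $\rho(a)^{-1}\rho(b)^{-1} = (\rho(b)\rho(a))^{-1} = \ga(b,a)^{-1}\rho(ba)^{-1}$. Setting $g=ba$ and reindexing $t=b^{-1}g$ turns the inner sum into exactly $(\mu *_\ga\nu)(g)$, giving $F_\ga(\mu *_\ga\nu)_\rho$. The second identity, for $\cF_\ga$, I would not prove directly; instead I would reduce it to the first via $\cF_\ga f = F_\ga\tilde f$ from (\ref{FtoF1}), which requires only the bookkeeping identity $\tilde\mu *_\ga\tilde\nu = \widetilde{\nu *_{\tilde\ga}\mu}$, a change of variables in the convolution using $\tilde\ga(a,b)=\ga(b^{-1},a^{-1})$.

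For inversion I would compute $F_\ga^{-1}(F_\ga f)(b)$ directly from (\ref{FtoF2}). Using $\rho(a)^{-1}\rho(b) = \ga(a^{-1},b)\rho(a^{-1}b)$ collapses the trace to $\chi_\rho(a^{-1}b)$, so the expression becomes $\frac{1}{|G|}\sum_a \frac{f(a)\ga(a^{-1},b)}{\ga(a,a^{-1})\ga(1,1)}\sum_\rho d_\rho\chi_\rho(a^{-1}b)$. The decisive step is the character sum $\sum_\rho d_\rho\chi_\rho(g) = |G|\ga(1,1)\gd_{g,1}$: this is the character of the regular $\ga$-representation, which decomposes with each irreducible appearing $d_\rho$ times (the fact already used for (\ref{nequalsumdsquare})), while its trace is read off directly from (\ref{alpharegrep}) as $\gd_{g,1}\sum_h\ga(1,h) = \gd_{g,1}|G|\ga(1,1)$. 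The Kronecker delta forces $a=b$, and (\ref{alphag1}) collapses the prefactor to $f(b)$. Since $F_\ga$ and $F_\ga^{-1}$ are linear maps between spaces of equal dimension $n=|G|$, this one-sided identity suffices.

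The Plancherel formulas then follow quickly. Combining convolution with inversion, I would evaluate the inverse transform at the identity: from (\ref{FtoF2}) and $\rho(1)=\ga(1,1)I$ one gets $\frac{1}{|G|}\sum_\rho d_\rho\trace((F_\ga h)_\rho) = h(1)/\ga(1,1)$. Taking $h = \mu *_\ga\nu$ and applying (\ref{FTconvformula}) yields the left side of (\ref{Planch2}) after using $\ga(t,t^{-1})=\ga(t^{-1},t)$. For (\ref{Planch1}), with each $\rho$ unitary, the key lemma is that the involution $\nu^{*,\ga}$ of (\ref{Malphahermtrans}) Fourier-transforms to the adjoint, i.e. $(F_\ga\mu^{*,\ga})_\rho = (F_\ga\mu)_\rho^*$, a one-line computation from (\ref{Falphaunitary}) and (\ref{rhoinverse}). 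Replacing $\mu$ by $\mu^{*,\ga}$ in (\ref{Planch2}) turns its right side into $\trace\bigl((F_\ga\nu)_\rho(F_\ga\mu)_\rho^*\bigr) = \inpro{(F_\ga\nu)_\rho,(F_\ga\mu)_\rho}$ and its left side into $\sum_a\nu(a)\overline{\mu(a)}$. I expect the only genuinely substantive point to be the character sum $\sum_\rho d_\rho\chi_\rho=\chi_{\mathrm{reg}}$; everything else is reindexing together with the two cocycle symmetries in (\ref{alphag1}).
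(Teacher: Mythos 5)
Your proof is correct, and while the convolution and inversion arguments coincide with the paper's (both rest on the same two pillars: the identity $\rho(a^{-1})/(\ga(a,a^{-1})\ga(1,1))=\rho(a)^{-1}$ from (\ref{rhoinverse}), and the regular character sum $\sum_\rho d_\rho\chi_\rho=\ga(1,1)|G|e_1$ --- the paper checks inversion on the basis $(e_g)$ where you do it for general $f$, a cosmetic difference), your treatment of the two Plancherel formulas takes a genuinely different route. The paper proves (\ref{Planch2}) and (\ref{Planch1}) by bilinearity/sesquilinearity, reducing to $\nu=e_g$, $\mu=e_h$ and computing both sides directly. You instead derive (\ref{Planch2}) as a corollary of the convolution formula by evaluating $F_\ga^{-1}$ at the identity (using $\rho(1)=\ga(1,1)I$ and $\ga(t,t^{-1})=\ga(t^{-1},t)$), and then obtain (\ref{Planch1}) from (\ref{Planch2}) by substituting $\mu^{*,\ga}$ for $\mu$ together with the compatibility $(F_\ga\mu^{*,\ga})_\rho=(F_\ga\mu)_\rho^*$ --- a fact the paper proves separately, and slightly later, as Lemma \ref{FThermitiancdn}. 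Your route is more structural: it makes visible that Plancherel is forced by the convolution formula plus the behaviour of $F_\ga$ under the involution, and it reuses Lemma \ref{FThermitiancdn} rather than duplicating its computation; the cost is that it imposes a strict logical ordering (convolution and inversion must come first), whereas the paper's basis-vector verifications are independent of one another. Both are complete; just make sure, if you adopt your ordering, that the hermiticity identity is stated before the Plancherel formulas rather than after, and that the short change-of-variables check of $\tilde\mu*_\ga\tilde\nu=\widetilde{\nu*_{\tilde\ga}\mu}$ is actually written out, since that is where the definition $\tilde\ga(a,b)=\ga(b^{-1},a^{-1})$ earns its keep.
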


\begin{proof} Since $F_\ga,F_\ga^{-1}$ and $\cF_\ga,\cF_\ga^{-1}$ 
are linear maps between $n$--dimensional spaces, to prove they 
are inverses, it suffices to prove that
$F_\ga^{-1}F_\ga=I$ and $\cF_\ga^{-1}\cF_\ga=I$.
Moreover, it suffices to prove the just first, since if it holds, 
then (\ref{FtoF1}) and (\ref{FtoF2}) imply
$$ \cF_\ga^{-1} \cF_\ga f (a) = F_\ga^{-1} (\cF_\ga f) (a^{-1})
= F_\ga^{-1} (F_\ga \tilde f) (a^{-1}) = \tilde f (a^{-1})= f(a). $$

We will show that $F_\ga^{-1}F_\ga=I$ on the standard basis $(e_g)_{g\in G}$.
First recall the character of the regular $\ga$-representation is 
$\sum_\rho d_\rho \chi_\rho = \ga(1,1)|G|e_1$. Therefore, we calculate
\begin{align*}
(F_\ga^{-1} & F_\ga e_g)(h)
={1\over|G|}\sum_\rho d_\rho \trace\Bigl(
{ \rho(g^{-1}) \rho(h) \over\ga(g,g^{-1})\ga(1,1)} \Bigr)
={1\over|G|}\sum_\rho d_\rho \trace\Bigl(
{ \ga(g^{-1},h) \rho(g^{-1}h) \over\ga(g,g^{-1})\ga(1,1)} \Bigr) \cr
&={1\over|G|}\sum_\rho d_\rho {\chi_\rho(g^{-1}h)\over\ga(g,g^{-1}h)}
={1\over|G|}M_\ga\bigl(\hbox{$\sum_\rho d_\rho \chi_\rho$}\bigr)_{g,h}
=M_\ga\bigl(\ga(1,1)e_1\bigr)_{g,h}
= I_{g,h} = e_g(h).
\end{align*}
The inversion formula can be expanded 
\begin{equation}
\label{Finvform}
(F_\ga^{-1}F_\ga f)(g)
={1\over|G|}\sum_\rho d_\rho \trace\Bigl(
\sum_{a\in G}{ f(a)\rho(a^{-1})\over\ga(a,a^{-1})\ga(1,1)}
 \rho(g)\Bigr).
\end{equation}

Since both sides of the Plancherel formula (\ref{Planch2}) are
linear in $\nu$ and $\mu$, it suffices (by linearity) to prove 
it for $\nu=e_g$ and $\mu=e_h$.
Using (\ref{Finvform}), we have
\begin{align*}
{1\over|G|}\sum_\rho d_\rho \trace\bigl((F_\ga e_g)_\rho(F_\ga e_h)_\rho\bigr)
&= {1\over|G|}\sum_\rho d_\rho \trace\Bigl(
\sum_{a\in G}{ e_g(a)\rho(a^{-1})\over\ga(a,a^{-1})\ga(1,1)}
 {\rho(h^{-1})\over\ga(h,h^{-1})\ga(1,1)}\Bigr) \cr
&={e_g(h^{-1})\over\ga(h,h^{-1})\ga(1,1)}
=\sum_{a\in G}{e_g(a)e_h(a^{-1})\over\ga(a,a^{-1})\ga(1,1)}.
\end{align*}
Since both sides of (\ref{Planch1}) are
linear in $\nu$ and conjugate linear in $\mu$, it
again suffices to consider $\nu=e_g$ and $\mu=e_h$.
Since $\rho$ is unitary,  $(F_\ga e_h)_\rho^*=(\rho(h)^{-1})^*=\rho(h)$,
and so 
\begin{align*}
{1\over|G|}\sum_\rho d_\rho \inpro{(F_\ga e_g)_\rho,(F_\ga e_h)_\rho\bigr)}
&= {1\over|G|}\sum_\rho d_\rho \trace\Bigl(
\sum_{a\in G}{ e_g(a)\rho(a^{-1})\over\ga(a,a^{-1})\ga(1,1)}
 \rho(h)\Bigr) \cr
&=e_g(h)
=\sum_{a\in G}e_g(a)\overline{e_h(a)}.
\end{align*}

We now prove the convolution formulas.
On one hand, we have
$$ \bigl( F_\ga (\nu*_\ga \mu)\bigr)_\rho = \sum_{a\in G} \sum_{t\in G} 
{ \nu(a t^{-1})\mu(t) \over \ga(at^{-1},t)} \rho(a)^*,  $$
Since
$\rho(a)^*\rho(b)^* = (\rho(b)\rho(a))^*
= (\ga(b,a)\rho(ba))^*= {\rho(ba)^*\over\ga(b,a)}$, 
and we obtain
\begin{align*}
(F_\ga \nu)_\rho (F_\ga\mu)_\rho
&= \sum_{a\in G} \nu(a)\rho(a)^* \sum_{b\in G} \mu(b)\rho(b)^*
= \sum_{a\in G} \sum_{b\in G} 
{ \nu(a) \mu(b) \over\ga(b,a)} \rho(ba)^*\cr
&= \sum_{c\in G} \sum_{t\in G} {\nu(t) \mu(c t^{-1}) \over\ga(ct^{-1},t)} \rho(c)^*
= \bigl(F_\ga(\mu*_\ga\nu)\bigr)_\rho,
\end{align*}
which gives the first convolution formula. From this, we have
$$ \cF_\ga(\tilde\nu) \cF_\ga(\tilde\mu) 
= \cF_\ga(\widetilde{\mu *_\ga \nu}), $$
and a calculation gives
\begin{align*}
\widetilde{\mu *_\ga \nu}(g)
&= (\mu*_\ga \nu)(g^{-1}) := \sum_{t\in G} 
{ \mu(g^{-1} t^{-1})\nu(t) \over \ga(g^{-1}t^{-1},t)}
= \sum_{t\in G} { \tilde\mu(tg)\tilde\nu(t^{-1}) \over 
\tilde\ga(t,g^{-1}t^{-1})} 
= \sum_{s\in G} 
{ \tilde\nu(gs^{-1}) \tilde\mu(s) \over \tilde\ga(sg^{-1},s^{-1})} 
\cr
&
= (\tilde\nu *_{\tilde\ga} \tilde\mu) (g),
\end{align*}
which gives the second convolution formula.
\end{proof}

To the best of our knowledge, (\ref{projFTdefn}) is the first time
that a Fourier transform has been defined for projective
representations. For ordinary representations,
the Fourier transform is well studied, see, e.g., \cite{D88}, \cite{T99}.

\begin{example}
\label{FTsquare}
The condition for $P=M_\ga(\nu)$ to satisfy $P^2=P$ is 
$ \nu*_\ga \nu =\nu, $
which transforms to 
$$ F_\ga (\nu*_\ga \nu) = F_\ga(\nu)^2=F_\ga(\nu). $$
i.e., each $(F_\ga \nu)_\rho$ satisfies this condition.
\end{example}

\begin{lemma}
\label{rhoFTformula}
If $\rho:G\to\GL(\CC^{d_\rho})$ is a {\it unitary} irreducible
projective representation, then
$$ (F_\ga \rho_{rs})_\xi 
= {|G|\over d_\rho}
\begin{cases}
0, & \xi\not\approx\rho; \cr
e_s^* e_r, & \xi=\rho.
\end{cases} $$ 
\end{lemma}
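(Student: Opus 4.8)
The plan is to reduce the whole computation to the orthogonality of coordinates (\ref{coordorth}) by evaluating $(F_\ga\rho_{rs})_\xi$ one matrix entry at a time. Since the spectral structure of $(F_\ga f)_\rho$ depends only on $\rho$ up to equivalence (by (\ref{spectstructequiv})) and every projective representation is equivalent to a unitary one, I would assume without loss of generality that every $\xi\in R$ is unitary, so that the simplified formula (\ref{Falphaunitary}) applies, giving $(F_\ga \rho_{rs})_\xi = \sum_{a\in G}\rho_{rs}(a)\,\xi(a)^*$, a $d_\xi\times d_\xi$ matrix.

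First I would read off the $(\ell,m)$-entry of this matrix. Since $(\xi(a)^*)_{\ell m} = \overline{\xi_{m\ell}(a)}$, the $(\ell,m)$-entry is $\sum_{a\in G}\rho_{rs}(a)\,\overline{\xi_{m\ell}(a)} = \inpro{\rho_{rs},\xi_{m\ell}}$, which is precisely an inner product of coordinate functions of the kind appearing in (\ref{coordorth}). The entire lemma is thus an assembly of such inner products.

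Next I would substitute (\ref{coordorth}) with $j=r$ and $k=s$. For $\xi\in R$ inequivalent to $\rho$, every such inner product vanishes, so the whole matrix is $0$. For $\xi=\rho$ (the only remaining possibility, since each equivalence class occurs exactly once in $R$), the inner product equals $\frac{|G|}{d_\rho}\gd_{rm}\gd_{s\ell}$, which is nonzero only at the single position $(\ell,m)=(s,r)$. Collecting these entries yields $\frac{|G|}{d_\rho}$ times the elementary matrix carrying a lone $1$ in position $(s,r)$, i.e. $\frac{|G|}{d_\rho}\,e_s^* e_r$, as claimed.

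I do not expect a genuine obstacle here: once the entrywise identification with $\inpro{\rho_{rs},\xi_{m\ell}}$ is in hand, the conclusion is immediate from (\ref{coordorth}). The only points needing care are bookkeeping ones — tracking the transpose and conjugate when passing from $\xi(a)^*$ to $\xi_{m\ell}$, so that the Kronecker deltas land in the correct slots and produce the outer product $e_s^* e_r$ rather than its transpose, and observing that within the complete set $R$ the dichotomy is genuinely ``$\xi=\rho$ or $\xi\not\approx\rho$,'' so the equivalent-but-unequal case never arises.
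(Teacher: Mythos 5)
Your proposal is correct and follows essentially the same route as the paper's proof: reduce to the case where all $\xi\in R$ are unitary via (\ref{spectstructequiv}), write $(F_\ga\rho_{rs})_\xi=\sum_a\rho_{rs}(a)\xi(a)^*$ entrywise as the inner products $\inpro{\rho_{rs},\xi_{m\ell}}$, and apply the orthogonality of coordinates (\ref{coordorth}). The index bookkeeping you flag is handled exactly as in the paper, so there is nothing to add.
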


\begin{proof}
In view of (\ref{spectstructequiv}), it suffices to prove this
when all the $\xi\in R$ are unitary. Here, the orthogonality of
coordinates (\ref{coordorth}) gives
$$ (F_\ga \rho_{rs})_\xi 
= \sum_{a\in G} \rho_{rs}(a)\xi(a)^*
= \sum_{a\in G} \rho_{rs}(a)(\overline{\xi_{kj}(a)})_{j,k=1}^{d_\xi}
= (\inpro{\rho_{rs},\xi_{kj}})_{j,k=1}^{d_\xi} 
 = \begin{cases}
0, & \xi\not\approx\rho; \cr
{|G|\over d_\rho} e_s^*e_r, & \xi=\rho.
\end{cases} $$ 
\end{proof}

\begin{example}
As 
examples, 
we have (for $\rho$ unitary or not) that
the $\ga$-character satisfies
$$ (F_\ga \chi_\rho)_\xi 
 = {|G|\over d_\rho}
 \begin{cases}
0, & \xi\not\approx\rho; \cr
I, & \xi=\rho,
\end{cases} $$ 
and if
$$ f(g)=\trace(\rho(g)A)=\inpro{\rho(g),A^*}
=\sum_{j,k} \rho_{jk}(g) a_{kj}, $$
then
\begin{equation}
\label{FTirredA}
 (F_\ga f)_\xi 
 = {|G|\over d_\rho}
 \begin{cases}
0, & \xi\not\approx\rho; \cr
A, & \xi=\rho.
\end{cases}
\end{equation}
In particular, for $f(g)=\inpro{\rho(g)v,v}=\trace(\rho(g)vv^*)$, i.e., $A=vv^*$ above, 
we have
\begin{equation}
\label{FTirredGram}
 (F_\ga f)_\xi 
 = {|G|\over d_\rho}
 \begin{cases}
0, & \xi\not\approx\rho; \cr
vv^*, & \xi=\rho.
\end{cases}
\end{equation}
\end{example}

\begin{lemma}
\label{FThermitiancdn}
For $\ga$ unitary, a $(G,\ga)$-matrix $M_\ga(\nu)$ is Hermitian
if and only if 
$$ F_\ga(\nu)_\rho^* = F_\ga(\nu)_\rho,\qquad \rho\in R. $$
\end{lemma}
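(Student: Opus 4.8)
The plan is to prove Lemma \ref{FThermitiancdn} by relating the Hermitian-transpose operation on $(G,\ga)$-matrices to the $\ga$-Fourier transform, using the two machinery pieces already established: the formula $(M_\ga(\nu))^* = M_\ga(\nu^{*,\ga})$ from the first lemma, and the fact that $M_\ga(\cdot)$ is a linear bijection onto the space of $(G,\ga)$-matrices. Since $M_\ga$ is injective (the $(G,\ga)$-matrices form an $n$-dimensional space parametrised by $\nu\in\CC^G$), the matrix $M_\ga(\nu)$ is Hermitian if and only if $M_\ga(\nu^{*,\ga}) = M_\ga(\nu)$, which holds if and only if $\nu^{*,\ga}=\nu$. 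So the whole lemma reduces to showing that the pointwise condition $\nu^{*,\ga}=\nu$ is equivalent to the spectral condition $(F_\ga\nu)_\rho^*=(F_\ga\nu)_\rho$ for every $\rho\in R$.

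First I would exploit the injectivity of the Fourier transform (proved in the preceding theorem to be an isomorphism $\CC^G\to\oplus_\rho M_{d_\rho}(\CC)$). Because $F_\ga$ is a bijection, the condition $\nu^{*,\ga}=\nu$ is equivalent to $F_\ga(\nu^{*,\ga})=F_\ga(\nu)$, i.e. $(F_\ga\nu^{*,\ga})_\rho=(F_\ga\nu)_\rho$ for all $\rho\in R$. Thus the key technical step is the identity
\begin{equation}
\label{FTofstar}
(F_\ga \nu^{*,\ga})_\rho = (F_\ga \nu)_\rho^*, \qquad \rho\in R,
\end{equation}
asserting that the $\ga$-Fourier transform intertwines the involution $\nu\mapsto\nu^{*,\ga}$ on $\CC^G$ with the Hermitian transpose on each matrix block. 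Granting \eqref{FTofstar}, the chain of equivalences
$$ M_\ga(\nu)\hbox{ Hermitian}\iff \nu^{*,\ga}=\nu \iff (F_\ga\nu^{*,\ga})_\rho=(F_\ga\nu)_\rho\ \forall\rho \iff (F_\ga\nu)_\rho^*=(F_\ga\nu)_\rho\ \forall\rho $$
closes the argument.

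To establish \eqref{FTofstar} I would compute directly from the definitions, taking the $\rho\in R$ unitary (which is justified by \eqref{spectstructequiv}, since passing to an equivalent unitary representation $T\rho T^{-1}$ conjugates the block, preserving both Hermiticity and the identity). Using the unitary form \eqref{Falphaunitary}, $(F_\ga\nu)_\rho=\sum_{a}\nu(a)\rho(a)^*$, so that $(F_\ga\nu)_\rho^*=\sum_a\overline{\nu(a)}\rho(a)$. On the other side, $(F_\ga\nu^{*,\ga})_\rho=\sum_a\nu^{*,\ga}(a)\rho(a)^*$, where $\nu^{*,\ga}(a)=\overline{\nu(a^{-1})}\ga(a,a^{-1})\ga(1,1)$ by \eqref{Malphahermtrans}. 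Substituting and using \eqref{rhoinverse} in the form $\rho(a)^*=\rho(a)^{-1}=\rho(a^{-1})/(\ga(a,a^{-1})\ga(1,1))$, the factor $\ga(a,a^{-1})\ga(1,1)$ cancels, leaving $\sum_a\overline{\nu(a^{-1})}\rho(a^{-1})$, which equals $\sum_a\overline{\nu(a)}\rho(a)$ after reindexing $a\mapsto a^{-1}$. This matches $(F_\ga\nu)_\rho^*$ exactly.

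The main obstacle is purely bookkeeping: getting the cocycle factors and the conjugation to cancel cleanly in \eqref{FTofstar}, since several formulas ($\nu^{*,\ga}$, the inversion \eqref{rhoinverse}, and the unitarity $\rho(a)^*=\rho(a)^{-1}$) each contribute a $\ga(a,a^{-1})\ga(1,1)$ factor that must be tracked. The reindexing $a\mapsto a^{-1}$ at the end is harmless because summation over $G$ is invariant under inversion, but one must verify that the complex conjugation applied to $\nu(a^{-1})$ lands on the correct argument. No deep ingredient is needed beyond the already-proved Fourier inversion (to guarantee injectivity) and the explicit involution formula \eqref{Malphahermtrans}.
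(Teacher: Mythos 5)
Your proposal is correct and follows essentially the same route as the paper: reduce Hermiticity of $M_\ga(\nu)$ to $\nu^{*,\ga}=\nu$, pass through the injectivity of $F_\ga$, and verify the intertwining identity $(F_\ga\nu^{*,\ga})_\rho=(F_\ga\nu)_\rho^*$ by a direct computation with (\ref{rhoinverse}) and the unitarity of $\ga$. The only cosmetic difference is that the paper pulls the adjoint outside the sum before invoking (\ref{rhoinverse}), whereas you cancel the cocycle factor termwise and reindex $a\mapsto a^{-1}$; both are the same calculation.
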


\begin{proof} We have $M_\ga(\nu)^*=M_\ga(\nu)$, i.e.,
$\nu^{*,\ga}=\nu$, 
 if and only if
 $F_\ga(\nu^{*,\ga})_\rho= F_\ga(\nu)_\rho$, 
for all $\rho$.
Using (\ref{rhoinverse}), we have
\begin{align*}
F_\ga(\nu^{*,\ga})_\rho
&= \sum_{a\in G} \overline{\nu(a^{-1})}\ga(a,a^{-1})\ga(1,1)\rho(a)^*
= \Bigl(\sum_{a\in G} \nu(a^{-1})
{\rho(a) \over\ga(a,a^{-1})\ga(1,1)} \Bigl)^* \cr
&= \Bigl(\sum_{a\in G} \nu(a^{-1})
\rho(a^{-1})^{*} \Bigl)^*
= F_\ga(\nu)_\rho^*,
\end{align*}
which gives the result.
\end{proof}

Example \ref{FTsquare} and Lemmas \ref{rhoFTformula} and \ref{FThermitiancdn}
are sufficient to obtain the characterisation of tight $(G,\ga)$-frames 
given in \S \ref{Gframecharchap}.
Before doing this, we give more precise results about the spectral 
structure of $(G,\ga)$-matrices, which are both enlightening
and useful for other applications. 

\begin{definition}
If $(F_\ga\nu)_\xi=0$, for each $\xi\not\approx\rho$,
then we say that $\nu\in\CC^G$ is a {\bf $\rho$-function} and 
$M_\ga(\nu)$ is a {\bf $\rho$-matrix}.
\end{definition}

The vector spaces of $\rho$-functions and $\rho$-matrices depend only 
$\rho$ up to equivalence, and 
have dimension $d_\rho^2$.
It follows from the convolution formula (\ref{FTconvformula}) for $F_\ga$
that 
\begin{itemize}
\item The $\rho$-functions are closed under the $*_\ga$ convolution.
\item The product of $\rho$-matrices is a $\rho$-matrix.
\end{itemize}

\begin{lemma} The following are equivalent
\begin{enumerate}
\item $M_\ga(\nu)$ is a $\rho$-matrix.
\item $\nu$ is a $\rho$-function.
\item $\nu\in\spam\{\rho_{jk}:1\le j,k\le d_\rho\}$.
\item $\nu\perp \xi_{jk}$, $1\le j,k\le d_\xi$, $\forall\xi\in R$, $\xi\ne\rho$.
\end{enumerate}
\end{lemma}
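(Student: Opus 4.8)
The plan is to establish the cyclic chain of implications $(1)\Leftrightarrow(2)$, $(2)\Rightarrow(3)\Rightarrow(4)\Rightarrow(2)$, leaning heavily on the orthogonal decomposition (\ref{rhofunctorthog}) and the Fourier transform formulas already proved. The equivalence $(1)\Leftrightarrow(2)$ is immediate from the definition of a $\rho$-matrix just given, since $M_\ga$ is a linear bijection between $\CC^G$ and $M_{(G,\ga)}$ and the $\rho$-matrix/$\rho$-function conditions are defined by the identical Fourier condition $(F_\ga\nu)_\xi=0$ for $\xi\not\approx\rho$; I would dispatch this in one sentence and spend no further effort on it.

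The substantive content is the equivalence of the Fourier condition $(2)$ with the membership condition $(3)$. First I would show $(2)\Rightarrow(3)$ contrapositively via the decomposition (\ref{rhofunctorthog}): writing $\nu=\sum_{\xi\in R}\nu_\xi$ with $\nu_\xi\in U_{\xi,\ga}$, I would argue that the Fourier transform $(F_\ga\,\cdot\,)_\xi$ annihilates every coordinate space $U_{\eta,\ga}$ with $\eta\not\approx\xi$ while detecting $U_{\xi,\ga}$ faithfully. Concretely, Lemma \ref{rhoFTformula} shows $(F_\ga\rho_{rs})_\xi=0$ when $\xi\not\approx\rho$, so any $\nu\in U_{\rho,\ga}=\spam\{\rho_{jk}\}$ is automatically a $\rho$-function, giving $(3)\Rightarrow(2)$. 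Conversely, if $\nu$ is a $\rho$-function, then for each $\xi\not\approx\rho$ the vanishing $(F_\ga\nu)_\xi=0$ together with the same lemma (applied to the $U_{\xi,\ga}$ component of $\nu$, which is a linear combination of $\xi_{jk}$ whose transform at $\xi$ recovers its coefficients faithfully) forces the $U_{\xi,\ga}$-component to vanish, leaving $\nu\in U_{\rho,\ga}$, which is exactly $(3)$.

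For $(3)\Leftrightarrow(4)$ I would simply invoke the orthogonal decomposition (\ref{rhofunctorthog}): membership $\nu\in U_{\rho,\ga}$ is equivalent to $\nu$ being orthogonal to every other summand $U_{\xi,\ga}$, and since each $U_{\xi,\ga}=\spam\{\xi_{jk}\}$, this orthogonality is precisely the stated condition $\nu\perp\xi_{jk}$ for all $\xi\ne\rho$ and all $j,k$. This is the cleanest link, flowing directly from the fact that the coordinate functions of inequivalent irreducibles are mutually orthogonal via the orthogonality of coordinates (\ref{coordorth}).

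The main obstacle, such as it is, will be the faithfulness half of $(2)\Rightarrow(3)$: I must be careful that the Fourier transform $(F_\ga\,\cdot\,)_\xi$, when restricted to the single summand $U_{\xi,\ga}$, is injective, so that $(F_\ga\nu)_\xi=0$ genuinely forces the $\xi$-component of $\nu$ to be zero rather than merely lying in some kernel. This follows because by Lemma \ref{rhoFTformula} the transform sends the basis $\{\xi_{rs}\}$ of $U_{\xi,\ga}$ to the (nonzero) scalar multiples $\frac{|G|}{d_\xi}e_s^*e_r$ of the matrix units $E_{rs}$, which are themselves a basis of $M_{d_\xi}(\CC)$; hence $(F_\ga\,\cdot\,)_\xi|_{U_{\xi,\ga}}$ is a linear isomorphism onto $M_{d_\xi}(\CC)$ and in particular injective. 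Once this injectivity is noted the entire lemma is a short formal argument, with all the analytic weight carried by the orthogonality relations and Lemma \ref{rhoFTformula} established earlier.
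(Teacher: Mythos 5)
Your proof is correct and rests on the same two ingredients as the paper's: the Fourier inversion formula underlying the decomposition (\ref{rhofunctorthog}) and the orthogonality of coordinates (\ref{coordorth}). The only cosmetic difference is that the paper reads $(2)\Rightarrow(3)$ directly off the explicit inversion formula $\nu(g)=\frac{1}{|G|}\sum_\xi d_\xi\trace((F_\ga\nu)_\xi\xi(g))$, whereas you route it through the injectivity of $(F_\ga\,\cdot\,)_\xi$ on each summand $U_{\xi,\ga}$ via Lemma \ref{rhoFTformula} --- the same fact in different packaging.
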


\begin{proof} Let $B=(F_\ga \nu)_\rho$.
Then $\nu$ is a $\rho$-function if
and only if $(F_\ga \nu)_\xi=0$, $\xi\not\approx\rho$, i.e.,
$$\nu(g) = {1\over |G|} \sum_\xi d_\xi \trace( (F_\ga\nu)_\xi\xi(g))
= {d_\rho\over|G|} \trace(B\rho(g))
= {d_\rho\over|G|} \sum_{j,k} {b_{kj}} \rho_{jk}(g). $$
This and the fact and the orthogonality of coordinates (\ref{coordorth})
gives the result.
\end{proof}

From $F_\ga^{-1}F_\ga=I$ and (\ref{rhofunctorthog}), 
we have that 
each $f\in\CC^G$ can be uniquely decomposed 
as a sum of orthogonal $\rho$-functions
\begin{equation}
\label{rhofunctdecomp}
f=\sum_\rho f_\rho, \qquad
f_\rho =f_{[\rho]}:= {d_\rho\over|G|} \trace( (F_\ga f)_\rho\rho),
\end{equation}
which we will call
{\bf $F_\ga$-Fourier decomposition}
of $f\in\CC^G$ into orthogonal $\rho$-functions.
We will also call
\begin{equation}
\label{Mfunctdecomp}
M_\ga(f) = \sum_\rho M_\ga(f_\rho),
\end{equation}
the {\bf $F_\ga$-Fourier decomposition} of 
$M_\ga(f)$ into $\rho$-matrices.
The following lemma shows that
\begin{equation}
\label{rhomatprods}
M_\ga(f_\rho) M_\ga(f_\xi) = 0, \quad \rho\not\approx\xi.
\end{equation}

\begin{lemma}
\label{prodofrhomats}
For $\rho,\xi\in R$, we have
$$  \rho_{jk}*_\ga\xi_{rs}
=\sum_\ell \inpro{\xi_{rs},\rho_{k\ell}}\, \rho_{j \ell}
= {|G|\over d_\rho} \begin{cases}
 \rho_{js}, & \xi=\rho,\  r=k; \cr
0, & \hbox{otherwise}.
\end{cases}
$$
\end{lemma}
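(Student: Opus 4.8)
The plan is to compute the $\ga$-convolution $\rho_{jk}*_\ga\xi_{rs}$ directly from its definition in (\ref{Malphaconv}), and recognise the resulting sum as an instance of the orthogonality of coordinates (\ref{coordorth}). Writing out the definition,
\begin{equation*}
(\rho_{jk}*_\ga\xi_{rs})(g) = \sum_{t\in G} \frac{\rho_{jk}(gt^{-1})\,\xi_{rs}(t)}{\ga(gt^{-1},t)}.
\end{equation*}
The first step is to eliminate the cocycle in the denominator using the projective multiplication rule. Since $\rho(gt^{-1})\rho(t) = \ga(gt^{-1},t)\rho(g)$, taking the $(j,k)$-entry of this matrix identity gives
\begin{equation*}
\sum_\ell \rho_{j\ell}(gt^{-1})\rho_{\ell k}(t) = \ga(gt^{-1},t)\,\rho_{jk}(g),
\end{equation*}
so that $\rho_{jk}(gt^{-1})/\ga(gt^{-1},t)$ is not directly what appears; instead I would substitute $a=gt^{-1}$ and factor $\rho_{jk}(a)$ against $\xi_{rs}(t)=\xi_{rs}(a^{-1}g)$, aiming to expose a sum over $t$ (equivalently over $a$) that matches the left side of (\ref{coordorth}).

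The cleaner route, which I expect to be the main engine of the proof, is to first establish the identity $\rho_{jk}*_\ga\xi_{rs}=\sum_\ell \inpro{\xi_{rs},\rho_{k\ell}}\,\rho_{j\ell}$ and then evaluate the inner products. To get the identity, I would expand $\rho_{jk}(gt^{-1})$ using $\rho(gt^{-1})=\rho(g)\rho(t)^{-1}\ga(gt^{-1},t)\cdot(\text{correction})$; more precisely, from $\rho(gt^{-1})\rho(t)=\ga(gt^{-1},t)\rho(g)$ I obtain $\rho(gt^{-1})=\ga(gt^{-1},t)\rho(g)\rho(t)^{-1}$, hence in coordinates $\rho_{jk}(gt^{-1})=\ga(gt^{-1},t)\sum_\ell \rho_{j\ell}(g)(\rho(t)^{-1})_{\ell k}$. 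Substituting this into the convolution sum cancels the $\ga(gt^{-1},t)$ factor exactly, yielding
\begin{equation*}
(\rho_{jk}*_\ga\xi_{rs})(g) = \sum_\ell \rho_{j\ell}(g)\sum_{t\in G} (\rho(t)^{-1})_{\ell k}\,\xi_{rs}(t).
\end{equation*}
The inner sum $\sum_{t}(\rho(t)^{-1})_{\ell k}\xi_{rs}(t)$ is precisely the coordinate-orthogonality pairing: using $\rho$ unitary so that $(\rho(t)^{-1})_{\ell k}=\overline{\rho_{k\ell}(t)}$, this equals $\inpro{\xi_{rs},\rho_{k\ell}}$, giving the claimed middle expression.

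Finally I would invoke (\ref{coordorth}) to evaluate each $\inpro{\xi_{rs},\rho_{k\ell}}$: it vanishes unless $\xi=\rho$, and when $\xi=\rho$ it equals $\tfrac{|G|}{d_\rho}\gd_{rk}\gd_{s\ell}$. The factor $\gd_{rk}$ forces $r=k$, and $\gd_{s\ell}$ collapses the sum over $\ell$ to the single term $\ell=s$, leaving $\tfrac{|G|}{d_\rho}\rho_{js}$, which is exactly the stated formula. The one point requiring care is the reduction to the unitary case for (\ref{coordorth}): the lemma does not assume $\rho,\xi$ unitary, but the intended reading (consistent with Example \ref{starrhojk} and the surrounding development) is that the irreducibles in $R$ are unitary, so I would either state this hypothesis or, following the device used in Lemma \ref{rhoFTformula} via (\ref{spectstructequiv}), reduce to unitary representatives. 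The main obstacle is thus not conceptual but bookkeeping: ensuring the cocycle factor $\ga(gt^{-1},t)$ cancels cleanly and that the index contractions from the two Kronecker deltas are tracked correctly.
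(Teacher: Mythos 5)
Your proposal is correct and follows essentially the same route as the paper: expand the $\ga$-convolution, factor $\rho(gt^{-1})$ as $\rho(g)$ times $\rho(t)^{-1}=\rho(t)^*$ so the cocycle cancels, and finish with the orthogonality of coordinates (\ref{coordorth}). Your cancellation via $\rho(gt^{-1})\rho(t)=\ga(gt^{-1},t)\rho(g)$ is in fact a slightly more direct piece of bookkeeping than the paper's use of (\ref{rhoinverse}) together with the identity $\ga(g,t^{-1})\ga(gt^{-1},t)=\ga(1,1)\ga(t,t^{-1})$, but the argument is the same.
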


\begin{proof}
We have
$$ (\rho_{jk}*_\ga\xi_{rs})(g)
=\sum_{t\in G} {\rho_{jk}(gt^{-1})\xi_{rs}(t)\over\ga(gt^{-1},t)}. $$
Since
$$ \rho(gt^{-1}) 
= {\rho(g)\rho(t^{-1})\over\ga(g,t^{-1})}
= {\rho(g)\ga(t,t^{-1})\ga(1,1)\rho(t)^*\over\ga(g,t^{-1})}, $$
and 
$\ga(g,t^{-1}) \ga(gt^{-1},t) = \ga(g,t^{-1}t)\ga(t^{-1},t)=
\ga(1,1) \ga(t,t^{-1})$,
we have
\begin{align*}
(\rho_{jk}*_\ga\xi_{rs})(g)
&=\sum_{t\in G} { \ga(t,t^{-1})\ga(1,1) \over\ga(g,t^{-1})\ga(gt^{-1},t)}
(\rho(g)\rho(t)^*)_{jk} \xi_{rs}(t) \cr
& =\sum_{t\in G} 
\sum_\ell \rho_{j\ell}(g)\overline{\rho_{k\ell}(t)} \xi_{rs}(t)
= \sum_\ell \inpro{\xi_{rs},\rho_{k\ell}} \, \rho_{j\ell}(g),
\end{align*}
and the orthogonality completes the result.
\end{proof}

Further, if $\ga$ is unitary, then $f_\rho^{*,\ga}$ is also a $\rho$-function,
since Example \ref{starrhojk} gives
$$ f_\rho=\sum_{j,k} a_{jk}\rho_{jk} \Implies
f_\rho^{*,\ga}
= \sum_{j,k} \overline{a_{jk}} \rho_{jk}^{*,\ga}
= \sum_{j,k} \overline{a_{jk}} \rho_{kj}, $$
so the $\rho$-matrices are closed under the Hermitian transpose,
and from (\ref{rhomatprods}) we obtain
\begin{equation}
\label{rhomatinpro}
\inpro{M_\ga(f_\rho), M_\ga(f_\xi)}
= \trace(M_\ga(f_\rho)M_\ga(f_\xi^{*,\ga}))
=0, \quad \rho\not\approx\xi.
\end{equation}

\begin{example}
From 
(\ref{Mfunctdecomp})
(\ref{rhomatprods})
and (\ref{rhomatinpro}),
we have
$$ M_\ga(\nu)M_\ga(\mu)
= \sum_\rho M_\ga(\nu_\rho)M_\ga(\mu_\rho), \qquad
M_\ga(f)^k = \sum_\rho M_\ga(f_\rho)^k, $$
$$  \inpro{M_\ga(\nu),M_\ga(\mu)}
= |G|\inpro{\nu,\mu}
= |G|\sum_\rho \inpro{\nu_\rho,\mu_\rho}
= \sum_\rho\inpro{M_\ga(\nu_\rho),M_\ga(\mu_\rho)}. $$
\end{example}



\section{The spectral structure of the $(G,\ga)$--matrices}

The circulant matrices, i.e., the $(G,\ga)$-matrices for $G$ a
cyclic group and $\ga=1$, are all simultaneously unitarily diagonalisable
by the Fourier matrix, i.e., the characters (representations) of $G$
are the eigenvectors of the circulant matrices.
We now investigate to what extent this result extends to 
general $(G,\ga)$--matrices.

We recall from \S\ref{Chartheorysec} the orthogonal 
decomposition of $\CC^G$ into $\rho$-functions
$$ \CC^G = \bigoplus_{\rho\in R} U_{\rho,\ga},
\qquad
U_{\rho,\ga} := \spam\{\rho_{jk}:1\le j,k\le d_\rho\} . $$
For ordinary representations, i.e., $\ga=1$, we now present
the standard diagonalisation result (see \cite{D88}, \cite{J18}), which shows 
that the $U_{\rho,\ga}$ are invariant subspaces of the $(G,\ga)$-matrices.

Suppose henceforth that each $\rho\in R$ is unitary, 
and let $E$ be the unitary matrix
$$ E=E_R:=[ E_{\rho,k}:\rho\in R, 1\le k \le d_\rho], 
\qquad 
E_{\rho,k}:=
\hbox{$\sqrt{d_\rho\over|G|}$} 
[\rho_{k1},\rho_{k2},\ldots,\rho_{kd_\xi}]. $$

\begin{theorem} 
\label{ordinaryblockdiag}
For ordinary representations, 
the matrix
$E^* M_\ga(\nu)E$ is block diagonal, with diagonal blocks
$(A_\rho:\rho\in R,1\le k \le d_\rho)$, where
$A_\rho:=(\cF_\ga\nu)_\rho$.
\end{theorem}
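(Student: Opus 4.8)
The plan is to compute the $(\rho, k), (\xi, \ell)$-block of the conjugated matrix $E^* M_\ga(\nu) E$ directly, and show it vanishes unless $\rho = \xi$, in which case it equals $A_\rho := (\cF_\ga \nu)_\rho$ independently of $k$. Since $E$ is built from the (unitary) irreducible coordinates $E_{\rho,k} = \sqrt{d_\rho/|G|}\,[\rho_{k1}, \ldots, \rho_{kd_\rho}]$, the relevant quantity is a sum of the form $E_{\xi,\ell}^* M_\ga(\nu) E_{\rho,k}$, which expands into a double sum over $G$ against the matrix entries $M_\ga(\nu)_{g,h} = \nu(g^{-1}h)/\ga(g,g^{-1}h)$. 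I expect the computation to reduce, after a change of summation variable, to inner products of coordinate functions $\inpro{\rho_{ab}, \xi_{cd}}$, at which point the orthogonality of coordinates (\ref{coordorth}) forces $\rho = \xi$ and pins down the surviving indices.

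\medskip

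First I would write out a single matrix entry of the conjugated product. Fixing blocks $(\xi, \ell)$ and $(\rho, k)$, the entry is
\begin{equation*}
\bigl(E^* M_\ga(\nu) E\bigr)_{(\xi,\ell),(\rho,k)}
= \frac{\sqrt{d_\xi d_\rho}}{|G|} \sum_{g,h \in G}
\overline{\xi_{\ell a}(g)}\, M_\ga(\nu)_{g,h}\, \rho_{k b}(h),
\end{equation*}
where I have suppressed the internal coordinate indices $a, b$ that get summed against the vector structure of the blocks; being careful, each block is a scalar because $E_{\rho,k}$ is a \emph{row} of coordinate functions, so the correct object is a sum over the single coordinate running through each block. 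The next step is to substitute the explicit formula for $M_\ga(\nu)_{g,h}$ and make the substitution $h = g t$ (equivalently $t = g^{-1} h$), which turns $\nu(g^{-1}h)/\ga(g,g^{-1}h)$ into $\nu(t)/\ga(g,t)$ and lets the sum over $g$ act only on the representation coordinates.

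\medskip

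The heart of the argument is then to use the cocycle identity together with $\ga$-unitarity to rewrite $\rho_{kb}(gt)/\ga(g,t)$ using $\rho(g)\rho(t) = \ga(g,t)\rho(gt)$, so that $\rho_{kb}(gt)/\ga(g,t) = (\rho(g)\rho(t))_{kb}/\ga(g,t)^2$ — more usefully, I would run it the other way, writing $\rho(gt) = \rho(g)\rho(t)/\ga(g,t)$ so that the $1/\ga(g,t)$ factor is absorbed into a genuine product $\rho(g)\rho(t)$. After this the sum over $g$ pairs $\overline{\xi_{\ell a}(g)}$ against a coordinate of $\rho(g)$, producing an inner product of coordinate functions. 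Invoking (\ref{coordorth}) collapses the $\xi \ne \rho$ terms to zero and, for $\xi = \rho$, produces Kronecker deltas $\gd$ that eliminate the dependence on the outer index $k$ (hence the block structure: all $d_\rho$ blocks labelled $(\rho, k)$, $1 \le k \le d_\rho$, carry the \emph{same} $d_\rho \times d_\rho$ block $A_\rho$). The leftover sum over $t$ reassembles into $\sum_t \nu(t)\rho(t)/(\ga(t,t^{-1})\ga(1,1))$, which is precisely $(\cF_\ga \nu)_\rho$ by definition (\ref{FtoF1}).

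\medskip

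The main obstacle will be bookkeeping the coordinate indices correctly so that the cocycle factors cancel cleanly against the normalisation $\ga(t,t^{-1})\ga(1,1)$ appearing in the definition of $\cF_\ga$, rather than against $\ga(g,t)$ alone; getting the right transform ($\cF_\ga$ versus $F_\ga$) hinges on which variable substitution ($t = g^{-1}h$ versus $t = h^{-1}g$) one makes, and on the identity (\ref{rhoinverse}) relating $\rho(t)$, $\rho(t^{-1})^*$ and the cocycle. I would double-check the final identification against the $\ga = 1$ abelian case, where $E$ is the Fourier matrix and $A_\rho$ must reduce to the ordinary discrete Fourier coefficient, to confirm the normalisation is correct.
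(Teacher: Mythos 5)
Your proof is correct, and it reaches the result by a genuinely different route from the paper's. You compute the $(\xi,\ell),(\rho,k)$ block of $E^*M_\ga(\nu)E$ head-on: substitute $t=g^{-1}h$, expand $\rho_{kb}(gt)=\sum_c\rho_{kc}(g)\rho_{cb}(t)$ (valid since $\ga=1$), and let the orthogonality of coordinates (\ref{coordorth}) kill the off-diagonal blocks and leave $\sum_t\nu(t)\rho_{ab}(t)=((\cF_\ga\nu)_\rho)_{ab}$, independent of $k$. The paper instead runs the argument in reverse: it expands $\nu(g^{-1}h)$ by Fourier inversion as ${1\over|G|}\sum_\rho d_\rho\trace(A_\rho\rho(h^{-1}g))$, rewrites this as $\trace(\rho(g)A_\rho\rho(h)^*)$, and reads off $M_\ga(\nu)=E\,\diag(A_\rho)\,E^*$, implicitly invoking the unitarity of $E$ to conclude. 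The two are dual uses of the same orthogonality relations, but yours makes the vanishing of the off-diagonal blocks an explicit orthogonality statement rather than a consequence of $EE^*=I$, and it is in fact the method the paper itself adopts for the harder projective case (Theorem \ref{Galphamatdiag}), where the inversion-based argument breaks down; so your approach generalises more directly. Two small corrections: the parenthetical claim that ``each block is a scalar'' is wrong --- $E_{\rho,k}$ is a $G\times d_\rho$ block, so each block of $E^*M_\ga(\nu)E$ is a $d_\xi\times d_\rho$ matrix, and your indexed formula (with $a,b$ free) is the right object; and the worrying over cocycle normalisations ($\ga(g,t)$ versus $\ga(t,t^{-1})\ga(1,1)$) is vacuous here, since the theorem is stated only for ordinary representations, where every cocycle factor equals $1$ and $(\cF_\ga\nu)_\rho$ reduces to $\sum_t\nu(t)\rho(t)$.
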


\begin{proof} 
By way of motivation, 
if we write $E=[\xi_1,\xi_2,\ldots]$ 
and $M_\ga(v) E= E\gL$, then
$$ M_\ga(v)_{g,h} = (E\gL E^*)_{g,h}
= \sum_s\sum_t E_{g,s} \gL_{st} (E^*)_{t,h}
= \sum_s\sum_t \xi_s(g) \gL_{st} \overline{\xi_t(h)}.$$
By Fourier inversion, we calculate
$$ M_\ga(\nu)_{g,h} = \nu(g^{-1}h)
= (\cF_\ga^{-1}\cF_\ga \nu)(g^{-1}h)
= (F_\ga^{-1}\cF_\ga \nu)(h^{-1}g)
={1\over|G|} \sum_\rho d_\rho \trace\bigl(A_\rho \rho(h^{-1}g)\bigr).$$
Since the representations are ordinary, i.e., $\ga=1$,
$\trace\bigl(A_\rho \rho(h^{-1}g)\bigr)
= \trace\bigl(\rho(g)A_\rho \rho(h)^*\bigr).$
Hence, by writing $A_\rho=[a_{jk}^\rho]_{j,k=1}^{d_\rho}$, we obtain
$$ M_\ga(\nu)_{g,h} 
={1\over|G|} \sum_\rho d_\rho 
\sum_j (\rho(g) A_\rho \rho(h)^*)_{jj}
= \sum_\rho 
 \sum_j \sum_s\sum_t
{ d_\rho \over|G|} 
 \rho_{js}(g) a^\rho_{st} \overline{\rho_{jt}}(h), $$
which gives the result.
\end{proof}

From the above, it follows that the orthogonal subspaces
$$ U_{\rho,\ga,j} := \spam\{\rho_{jk}:1\le k\le d_\rho\}, \qquad
\rho\in R, \quad 1\le j \le d_\rho, $$
are invariant subspaces of the $(G,\ga)$-matrices when $\ga=1$.
These subspaces do not give a unique orthogonal decomposition of 
$U_{\rho,\ga}$ into invariant subspaces of the $(G,\ga)$-matrices,
since, for any unitary $T$, one has
\begin{equation}
\label{Urhoalphajnotunique}
U_{\rho,\ga} = \bigoplus_{j=1}^{d_\rho} U_{T\rho T^{-1},\ga,j}.
\end{equation}

It is natural to suppose that the $U_{\rho,\ga,j}$ are invariant
subspaces of the $(G,\ga)$-matrices for the projective case also, 
and to adapt the argument above to prove it. Here
$$ M_\ga(\nu)_{g,h} 
= \sum_\rho 
 \sum_j \sum_s\sum_t {\ga(h,h^{-1}g)\over\ga(g,g^{-1}h)}
{ d_\rho \over|G|} 
a^\rho_{st} \rho_{js}(g) \overline{\rho_{jt}}(h), $$
and so the remainder of the argument breaks down.

To understand the invariant subspaces of the $(G,\ga)$-matrices,
we first consider the range of the $(G,\ga)$-matrices $M_\ga(\rho_{jk})$.

\begin{lemma} Let $\rho,\xi\in R$. Then
$$ M_\ga(\rho_{jk}) v 
= \sum_{\ell =1}^{d_\rho} \sum_{h\in G}  
\rho_{\ell k}(h) v_h\, \overline{\rho_{\ell j}},\qquad v\in\CC^G. $$
and, in particular, 
$$ M_\ga(\rho_{jk}) \overline{\xi_{st}}
= {|G|\over d_\rho} 
\begin{cases}
\overline{\rho_{sj}} , & \xi=\rho, \ t=k; \cr
0, & \hbox{otherwise}.
\end{cases}
$$
\end{lemma}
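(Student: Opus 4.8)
The plan is to compute the action of $M_\ga(\rho_{jk})$ directly on an arbitrary vector $v\in\CC^G$ using the definition (\ref{Malphanuformula}), and then specialise to the basis $v=\overline{\xi_{st}}$ where the orthogonality of coordinates (\ref{coordorth}) collapses the sum. First I would write out the $g$-th entry of $M_\ga(\rho_{jk})v$:
\begin{equation*}
\bigl(M_\ga(\rho_{jk})v\bigr)_g
= \sum_{h\in G} \frac{\rho_{jk}(g^{-1}h)}{\ga(g,g^{-1}h)}\, v_h .
\end{equation*}
The key manoeuvre is to expand $\rho(g^{-1}h)$ so as to separate the $g$ and $h$ dependence. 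Using the cocycle relation $\rho(g^{-1})\rho(h)=\ga(g^{-1},h)\rho(g^{-1}h)$ together with (\ref{rhoinverse}) to write $\rho(g^{-1})$ in terms of $\rho(g)^{-1}=\rho(g)^*$ (as $\rho$ is unitary), I would convert $\rho_{jk}(g^{-1}h)/\ga(g,g^{-1}h)$ into a bilinear expression in $\rho(g)^*$ and $\rho(h)$. Concretely, since $\rho(g^{-1}h)=\rho(g)^{-1}\rho(h)/\ga(g,g^{-1}h)$ — rearranging $\rho(g)\rho(g^{-1}h)=\ga(g,g^{-1}h)\rho(h)$ — the scalar $\ga(g,g^{-1}h)$ cancels cleanly, leaving
\begin{equation*}
\frac{\rho_{jk}(g^{-1}h)}{\ga(g,g^{-1}h)}
= \bigl(\rho(g)^*\rho(h)\bigr)_{jk}
= \sum_{\ell=1}^{d_\rho}\overline{\rho_{\ell j}(g)}\,\rho_{\ell k}(h).
\end{equation*}
Summing against $v_h$ gives the first displayed formula, with the $g$-dependence carried entirely by $\overline{\rho_{\ell j}}$, as claimed.

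For the second formula I would substitute $v=\overline{\xi_{st}}$, so that $v_h=\overline{\xi_{st}(h)}$, and read off
\begin{equation*}
M_\ga(\rho_{jk})\,\overline{\xi_{st}}
= \sum_{\ell=1}^{d_\rho}\Bigl(\sum_{h\in G}\rho_{\ell k}(h)\,\overline{\xi_{st}(h)}\Bigr)\overline{\rho_{\ell j}}
= \sum_{\ell=1}^{d_\rho}\inpro{\rho_{\ell k},\xi_{st}}\,\overline{\rho_{\ell j}}.
\end{equation*}
Now the orthogonality of coordinates (\ref{coordorth}) gives $\inpro{\rho_{\ell k},\xi_{st}}=0$ unless $\xi=\rho$, in which case it equals $\tfrac{|G|}{d_\rho}\gd_{\ell s}\gd_{kt}$. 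This kills the $\ell$-sum, forcing $\ell=s$ and requiring $t=k$, and what survives is $\tfrac{|G|}{d_\rho}\overline{\rho_{sj}}$, exactly the stated value.

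The only point requiring care — the step I would treat as the main obstacle — is the bookkeeping in the expansion of $\rho_{jk}(g^{-1}h)/\ga(g,g^{-1}h)$ into $(\rho(g)^*\rho(h))_{jk}$: one must verify that the cocycle factor $\ga(g,g^{-1}h)$ appearing in the denominator is precisely the factor produced when rewriting $\rho(g^{-1}h)$ via $\rho(g)^{-1}\rho(h)$, so that the two cancel without leaving a residual multiplier. This is where the unitarity hypothesis $\rho(g)^{-1}=\rho(g)^*$ is used, and it is the reason the final answer involves the conjugates $\overline{\rho_{\ell j}}$ rather than the matrix entries themselves. Once that identity is in hand the rest is a direct application of (\ref{coordorth}), with no further subtlety.
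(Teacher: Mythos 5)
Your proof is correct and follows essentially the same route as the paper's: rewrite $\rho_{jk}(g^{-1}h)/\ga(g,g^{-1}h)$ as $(\rho(g)^*\rho(h))_{jk}=\sum_\ell\overline{\rho_{\ell j}(g)}\,\rho_{\ell k}(h)$ and then collapse the sum with the orthogonality of coordinates. One small slip worth fixing: the parenthetical identity should read $\rho(g^{-1}h)=\ga(g,g^{-1}h)\,\rho(g)^{-1}\rho(h)$ (the cocycle appears as a factor, not a divisor), which is what makes it cancel against the denominator in $M_\ga(\rho_{jk})_{g,h}$ and yields exactly your displayed formula.
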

 
\begin{proof}
Since $\rho(g^{-1}h) =\ga(g,g^{-1}h)\rho(g)^*\rho(h)$, 
we have
\begin{align*}
 (M_\ga(\rho_{jk})v)_g  
&= \sum_h M_\ga(\rho_{jk})_{g,h} v_h
= \sum_h {\rho_{jk}(g^{-1}h)\over\ga(g,g^{-1}h)} v_h
= \sum_h (\rho(g)^*\rho(h))_{jk} v_h \cr
&= \sum_h \sum_\ell (\rho(g)^*)_{j\ell}\rho(h)_{\ell k} v_h
= \sum_h \sum_\ell \overline{\rho_{\ell j}(g)}\rho_{\ell k}(h) v_h.
\end{align*}
Hence
$$ M_\ga(\rho_{jk}) \overline{\xi_{st}}
= \sum_\ell \inpro{\rho_{\ell k},\xi_{st}} \,\overline{\rho_{\ell j}}
=\begin{cases}
{|G|\over d_\rho} \overline{\rho_{sj}} , & \xi=\rho, \ t=k; \cr
0, & \hbox{otherwise},
\end{cases}
$$
since the entries of $\rho$ and $\xi$ are orthogonal.
\end{proof}

We therefore conclude that the orthogonal subspaces
$$ V_{\rho,\ga,j} := \spam\{\overline{\rho_{jk}}: 1\le k\le d_\rho\}
=\overline{U_{\rho,\ga,j}}
,
\qquad \rho\in R, \quad 1\le j\le d_\rho,$$
are invariant subspaces of the $(G,\ga)$-matrices.

This gives the desired simultaneous unitary (block) diagonalisation
of projective group matrices:

\begin{theorem} 
\label{Galphamatdiag}
For projective representations, 
the matrix
$\overline{E}^* M_\ga(\nu)\overline{E}$ is block diagonal, 
with diagonal blocks
$(B_\rho^T:\rho\in R,1\le k \le d_\rho)$, where
$B_\rho:=(F_\ga\nu)_\rho$.
\end{theorem}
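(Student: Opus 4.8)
The plan is to lean on the lemma immediately preceding the statement, which already shows that each $V_{\rho,\ga,j}=\spam\{\overline{\rho_{jk}}:1\le k\le d_\rho\}$ is an invariant subspace of every $(G,\ga)$-matrix; this makes block-diagonality automatic and leaves only the identification of the blocks. First I would record that $E$, and hence $\overline{E}$, is unitary: by the orthogonality of coordinates (\ref{coordorth}) the columns $\sqrt{d_\rho/|G|}\,\rho_{km}$ (indexed by $(\rho,k,m)$) are orthonormal, and there are $\sum_\rho d_\rho^2=|G|$ of them by (\ref{nequalsumdsquare}). The columns of $\overline{E}$ sitting in the block labelled by the pair $(\rho,k)$ are precisely $\{\sqrt{d_\rho/|G|}\,\overline{\rho_{km}}:1\le m\le d_\rho\}$, an orthonormal basis of $V_{\rho,\ga,k}$. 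Since these subspaces are invariant, $\overline{E}^*M_\ga(\nu)\overline{E}$ is block diagonal with one block per pair $(\rho,k)$.

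To compute a block I would reduce to the preceding lemma via the $F_\ga$-Fourier decomposition (\ref{rhofunctdecomp}) of $\nu$. Writing $B_\rho:=(F_\ga\nu)_\rho$, we have $\nu_\rho=\frac{d_\rho}{|G|}\trace(B_\rho\rho)=\frac{d_\rho}{|G|}\sum_{p,q}(B_\rho)_{pq}\rho_{qp}$, so by linearity of $M_\ga$ and (\ref{Mfunctdecomp}), $M_\ga(\nu)=\sum_\rho\frac{d_\rho}{|G|}\sum_{p,q}(B_\rho)_{pq}M_\ga(\rho_{qp})$. The preceding lemma gives $M_\ga(\rho_{qp})\overline{\xi_{st}}=\frac{|G|}{d_\rho}\overline{\rho_{sq}}$ when $\xi=\rho$ and $t=p$, and $0$ otherwise, so only the terms $\rho=\xi$, $p=t$ survive; the scalar factors collapse to yield
$$M_\ga(\nu)\,\overline{\xi_{st}}=\sum_{q=1}^{d_\xi}(B_\xi)_{tq}\,\overline{\xi_{sq}}.$$
Reading this off in the orthonormal basis $\{\sqrt{d_\xi/|G|}\,\overline{\xi_{sq}}\}$ of $V_{\xi,\ga,s}$ (the uniform scalar $\sqrt{d_\xi/|G|}$ cancels and does not affect matrix entries), the $(q,t)$-entry of the block is $(B_\xi)_{tq}$, so the block equals $B_\xi^T$. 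It depends on $\xi$ but not on the index $s$, hence is repeated $d_\xi$ times, exactly as stated. Equivalently, $(\overline{E}^*M_\ga(\nu)\overline{E})_{(\xi,s',t'),(\rho,s,t)}=\gd_{\rho\xi}\,\gd_{ss'}\,(B_\rho)_{tt'}$.

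I expect the genuine obstacle to be conceptual rather than computational, and to have been cleared already by the preceding lemma: one must work with the \emph{conjugated} frame $\overline{E}$, equivalently with the subspaces $V_{\rho,\ga,j}=\overline{U_{\rho,\ga,j}}$ spanned by the $\overline{\rho_{jk}}$ rather than the $\rho_{jk}$. As the text observes just before the statement, the naive imitation of the ordinary case (Theorem \ref{ordinaryblockdiag}) breaks down because the cocycle factor $\ga(h,h^{-1}g)/\ga(g,g^{-1}h)$ fails to simplify, so the $U_{\rho,\ga,j}$ are not invariant in the projective case. Once the action of $M_\ga(\rho_{jk})$ on the $\overline{\xi_{st}}$ is in hand, everything is forced; the only residual care is to track the transpose correctly and to confirm that it is the Fourier transform $F_\ga$ (producing $B_\rho^T$) that appears here, in contrast to the variant $\cF_\ga$ occurring in the ordinary statement.
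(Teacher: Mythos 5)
Your proof is correct and uses the same essential ingredients as the paper's — the identity $\rho(g^{-1}h)=\ga(g,g^{-1}h)\rho(g)^*\rho(h)$ (packaged inside the preceding lemma), the orthogonality of coordinates, and Fourier inversion — arriving at the same block entries $(B_\xi)_{tq}$. The only organizational difference is that you deduce the result by linearity from the lemma on $M_\ga(\rho_{jk})\overline{\xi_{st}}$, whereas the paper recomputes $(\overline{E}^*M_\ga(\nu)\overline{E})_{\ell m}$ entrywise from scratch; this is a harmless (arguably cleaner) rearrangement, not a different method.
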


\begin{proof}
Let $(F_\ga\nu)_\rho = B_\rho = [b_{jk}^\rho]_{j,k=1}^{d_\rho}$. Then
\begin{align*}
M_\ga(\nu)_{g,h} 
&= {\nu(g^{-1}h)\over\ga(g,g^{-1}h)}
 = {(F_\ga^{-1} F_\ga\nu)(g^{-1}h)\over\ga(g,g^{-1}h)}
 = {1\over\ga(g,g^{-1}h)}
{1\over|G|} \sum_\rho d_\rho \trace\bigl( B_\rho \rho(g^{-1}h)\bigr).
\end{align*}
Since
$ \rho(g^{-1}h) 
= \ga(g,g^{-1}h) \rho(g)^*\rho(h) , $
we obtain
$$ M_\ga(\nu)_{g,h} 
= \sum_\rho {d_\rho\over|G|} \trace\bigl( \rho(h) B_\rho \rho(g)^*\bigr)
= \sum_\rho {d_\rho\over|G|} 
\sum_j\sum_s\sum_t \rho_{js}(h) b_{st}^\rho \overline{\rho_{jt}(g)}. $$
Thus, we calculate 
\begin{align*}
(\overline{E_{\xi,k_\xi}}^* M_\ga(\nu) \overline{E_{\eta,k_\eta}} )_{\ell m}
&= \sum_g\sum_h {\sqrt{d_\xi}\over\sqrt{|G|}} {\xi_{k_\xi \ell}(g)}  
\sum_\rho {d_\rho\over|G|} \sum_{j,s,t}
\rho_{js}(h) b_{st}^\rho \overline{\rho_{jt}(g)}
{\sqrt{d_\eta}\over\sqrt{|G|}} \overline{\eta_{k_\eta m}}(h) \cr
&= {\sqrt{d_\xi d_\eta}\over|G|}
\sum_\rho {d_\rho\over|G|} \sum_{j,s,t}
\inpro{\xi_{k_\xi \ell},\rho_{jt}}
\inpro{\rho_{js},\eta_{k_\eta m}}
b_{st}^\rho 
=\begin{cases}
b_{ml}^\xi, & \xi=\eta; \cr
0, &\hbox{otherwise},
\end{cases}
\end{align*}
which gives the result.
\end{proof}

\begin{example}
For ordinary representations,
$\overline{\rho}(g)\overline{\rho}(h)
=\overline{\rho(g)\rho(h)} =\overline{\rho(gh)}
=\overline{\rho}(gh)$,
so that $\{\overline{\rho}\}_{\rho\in R}$ is
another complete set of ordinary representations for $G$,
with $E_{\overline{\rho},k}=\overline{E_{\rho,k}}$. Hence
from 
Theorem \ref{ordinaryblockdiag},
we have
$$ \overline{E}^* M_\ga(\nu) \overline{E}
=\diag(A_{\overline{\rho}}:\rho\in R,1\le k\le d_\rho), $$
where $A_{\overline{\rho}} = \cF_\ga(\nu)_{\overline{\rho}}
= F_\ga(\nu)_\rho^T = B_\rho^T$
(which is Theorem \ref{Galphamatdiag}).
%
\end{example}

\begin{example} The determinant of a $(G,\ga)$-matrix factors
$$ \det(M_\ga(\nu))=\det(\overline{E}^* M_\ga(\nu) \overline{E})
=\prod_\rho \det(B_\rho)^{d_\rho}
=\prod_{\rho\in R} \det( (F_\ga \nu)_\rho)^{d_\rho}. $$
This ``factorisation of the group determinant'' was one
of the motivations which lead to the development of representation 
theory (see \cite{J18}).
\end{example}

\begin{example}
The blocks of the diagonal form of a $(G,\ga)$-matrix
$M_\ga(f)$ are unique up to similarity, and in particular, the 
Jordan canonical form is given by the block diagonal matrix
with $d_\rho$ diagonal blocks given by the Jordan canonical
form of $(F_\ga f)_\rho$.
\end{example}

For ordinary representations, invariant subspaces
of the $(G,\ga)$-matrices are
$$ \{U_{\rho,\ga,j}\}_{\rho\in R, 1\le j\le d_\rho} 
= \{V_{\rho,\ga,j}\}_{\rho\in R, 1\le j\le d_\rho}. $$
For projective representations, the $\{V_{\rho,\ga,j}\}$
are invariant subspaces (with a neat block diagonal form).
It is no longer the case that even
$\{V_{\rho,\ga}\}_{\rho\in R} = \{U_{\rho,\ga}\}_{\rho\in R}$,
see (\ref{UrhoalphaVrhoalpharelat}).
Nevertheless, in the specific cases considered so far
(see \S\ref{dihedralgroupsect}), it 
appears that there is a block diagonal form for the 
$\{\cU_{\rho,\ga}\}_{\rho\in R}$
which is more complicated (the blocks are no longer ordered by $\rho$,
and blocks similar to the same $(F_\ga\nu)_\rho$ are not all identically
equal).


From the orthogonal decomposition
\begin{equation}
\label{rhofunctorthogfine}
\CC^G = \bigoplus_{\rho\in R}\bigoplus_{j=1}^{d_\rho} U_{\rho,\ga,j}, 
\end{equation}
we obtain a {\bf fine scale} $F_\ga$-Fourier decomposition
$$ M_\ga(f) = \sum_\rho\sum_{j=1}^{d_\rho} M_\ga (f_{\rho,j}), \qquad
f_{\rho,j}= {d_\rho\over|G|} \trace( (F_\ga f)_\rho e_je_j^* \rho) 
\in U_{\rho,\ga,j}. $$
We observe from 
(\ref{Urhoalphajnotunique}), or the formula
$ f_{\rho,j}(g) = \inpro{\rho(g)(F_\ga f)_\rho e_j,e_j}, $
that there is not a unique {\it fine scale} $F_\ga$-Fourier decomposition.

Let $M_{\rho,\ga,j}$ be the $d_\rho$-dimensional 
vector space of $(G,\ga)$-matrices of the form $M_\ga(f_{\rho,j})$. 
It follows from Lemma \ref{prodofrhomats}
(or the block diagonal form) that $M_{\rho,\ga,j}$ is closed
under multiplication, indeed $M_{\rho,\ga,j}M_{(G,\ga)} \subset M_{\rho,\ga,j}$,
and hence is an algebra. 
Moreover, from Lemma \ref{prodofrhomats}, we have
$M_\ga(f_{\rho,j})M_\ga(f_{\xi,k}) = 0$, $(\rho,j)\ne(\xi,k)$, 
and consequently
$$  M_\ga(\nu)M_\ga(\mu) = \sum_{\rho\in R}\sum_{j=1}^{d_\rho}
 M_\ga(\nu_{\rho,j})M_\ga(\mu_{\rho,j}). $$

We now show the Fourier decompositions of $M_\ga(f)$ are
into {\it low rank} $(G,\ga)$-matrices.

\begin{proposition} 
\label{Malpharankprop}
The rank of a $(G,\ga)$-matrix satisfies
\begin{equation}
\label{MalpharankI}
\rank(M_\ga(f)) 
= \sum_\rho \rank(M_\ga(f_\rho))
= \sum_\rho d_\rho \rank((F_\ga f)_\rho),
\end{equation}
\begin{equation}
\label{MalpharankII}
\rank(M_\ga(f)) 
\le \sum_\rho\sum_{j} \rank(M_\ga(f_{\rho,j})),
\qquad \rank(M_\ga(f_{\rho,j}))\in\{0,d_\rho\}.
\end{equation}
In particular, $M_\ga(f)$ is invertible if and only if $f_{\rho,j}\ne0$,
$\forall\rho,j$.
\end{proposition}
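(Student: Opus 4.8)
The plan is to derive both rank formulas directly from the block diagonalisation in Theorem \ref{Galphamatdiag}, which is the natural tool since rank is invariant under the unitary conjugation $M_\ga(f) \mapsto \overline{E}^* M_\ga(f) \overline{E}$. That theorem tells us $\overline{E}^* M_\ga(f) \overline{E}$ is block diagonal, and crucially each block $B_\rho^T = (F_\ga f)_\rho^T$ appears with multiplicity $d_\rho$ (indexed by $k$, $1 \le k \le d_\rho$). Since the rank of a block diagonal matrix is the sum of the ranks of its blocks, and $\rank(B_\rho^T) = \rank(B_\rho) = \rank((F_\ga f)_\rho)$, I would immediately obtain
\begin{equation}
\label{rankfromblocks}
\rank(M_\ga(f)) = \sum_\rho d_\rho \rank\bigl((F_\ga f)_\rho\bigr),
\end{equation}
which is the right-hand equality of (\ref{MalpharankI}).

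For the middle equality in (\ref{MalpharankI}), I would apply the same block-diagonalisation reasoning to each $\rho$-matrix summand $M_\ga(f_\rho)$ separately. Since $f_\rho$ is a $\rho$-function, its Fourier transform is supported only at $\rho$, i.e.\ $(F_\ga f_\rho)_\xi = 0$ for $\xi \not\approx \rho$ and $(F_\ga f_\rho)_\rho = (F_\ga f)_\rho$; hence the block diagonal form of $M_\ga(f_\rho)$ has the single nonzero block $(F_\ga f)_\rho^T$ repeated $d_\rho$ times and zeros elsewhere, giving $\rank(M_\ga(f_\rho)) = d_\rho \rank((F_\ga f)_\rho)$. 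Summing over $\rho$ and comparing with (\ref{rankfromblocks}) yields the middle equality. Alternatively, and perhaps more cleanly, I could invoke the orthogonality relation (\ref{rhomatprods}), $M_\ga(f_\rho)M_\ga(f_\xi)=0$ for $\rho\not\approx\xi$, together with the fact that the ranges of the distinct $\rho$-matrices lie in the orthogonal invariant subspaces $V_{\rho,\ga,j}$; orthogonality of ranges forces the rank of the sum $M_\ga(f) = \sum_\rho M_\ga(f_\rho)$ to be the sum of the individual ranks.

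For (\ref{MalpharankII}), the fine-scale decomposition $M_\ga(f) = \sum_\rho \sum_{j=1}^{d_\rho} M_\ga(f_{\rho,j})$ gives the inequality $\rank(M_\ga(f)) \le \sum_\rho \sum_j \rank(M_\ga(f_{\rho,j}))$ by subadditivity of rank under sums. For the claim $\rank(M_\ga(f_{\rho,j})) \in \{0, d_\rho\}$, I would use the formula $f_{\rho,j} = \frac{d_\rho}{|G|}\trace((F_\ga f)_\rho e_j e_j^* \rho)$, so that $(F_\ga f_{\rho,j})_\rho = (F_\ga f)_\rho e_j e_j^*$ is a rank-one-or-zero matrix (it is the product of $(F_\ga f)_\rho$ with the rank-one projection $e_j e_j^*$). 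By the block formula, $\rank(M_\ga(f_{\rho,j})) = d_\rho \rank((F_\ga f)_\rho e_j e_j^*)$, and since $(F_\ga f)_\rho e_j e_j^*$ has rank $0$ or $1$ according to whether the column $(F_\ga f)_\rho e_j$ vanishes or not, the rank of $M_\ga(f_{\rho,j})$ is either $0$ or $d_\rho$. The final invertibility criterion then follows: $M_\ga(f)$ is invertible (rank $n = \sum_\rho d_\rho^2$) exactly when every $(F_\ga f)_\rho$ is invertible, i.e.\ when $(F_\ga f)_\rho e_j \ne 0$ for all $\rho$ and all $j$, which is precisely the condition $f_{\rho,j} \ne 0$ for all $\rho, j$.

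The main obstacle I anticipate is the bookkeeping of the transpose and the multiplicity-$d_\rho$ structure in the block form, and being careful that $(F_\ga f_{\rho,j})_\rho = (F_\ga f)_\rho e_j e_j^*$ holds exactly as stated — this requires checking against Lemma \ref{rhoFTformula} and the definition of $f_{\rho,j}$, since the role of the index $j$ (selecting a column of $(F_\ga f)_\rho$ via the rank-one projection $e_j e_j^*$) must be matched correctly with the invariant subspace $U_{\rho,\ga,j}$. Everything else is a direct consequence of rank-additivity over orthogonal blocks and the subadditivity of rank under sums.
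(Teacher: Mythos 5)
Your proof is essentially the paper's own: both arguments read the ranks off the block diagonalisation of Theorem \ref{Galphamatdiag} (each block $(F_\ga f)_\rho^T$ repeated $d_\rho$ times), handle $M_\ga(f_\rho)$ and $M_\ga(f_{\rho,j})$ the same way via $(F_\ga f_{\rho,j})_\rho=(F_\ga f)_\rho e_je_j^*$, and use subadditivity of rank for (\ref{MalpharankII}). The one caveat concerns the final invertibility claim, which the paper states but does not itself prove: invertibility of each $(F_\ga f)_\rho$ is strictly stronger than all of its columns $(F_\ga f)_\rho e_j$ being nonzero, so your ``i.e.''\ at that step (like the proposition's ``if and only if'') delivers only the forward implication --- a singular $(F_\ga f)_\rho$ with no zero column, e.g.\ all entries equal to $1$, gives $f_{\rho,j}\ne0$ for all $j$ while $M_\ga(f)$ fails to be invertible.
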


\begin{proof}
The block diagonal matrix $\overline{E}^* M_\ga(f) \overline{E}$ has
diagonal blocks $(F_\ga f)_\rho^T=(F_\ga f_\rho)_\rho^T$, each repeated
$d_\rho$ times, and so we have
$$ \rank(M_\ga(f)) 
= \sum_\rho d_\rho \rank((F_\ga f)_\rho)
= \sum_\rho d_\rho \rank((F_\ga f_\rho)_\rho). $$
The block diagonal form of $M_\ga(f_\rho)$ has just $d_\rho$ (possibly) 
nonzero blocks $F_\ga(f_\rho)_\rho^T$, and so the rank of $M_\ga(f_\rho)$
is $d_\rho\rank((F_\ga f_\rho)_\rho)$.
The block diagonal form of $M_\ga(f_{\rho,j})$ has $d_\rho$ (possibly) 
nonzero blocks 
$$ (F_\ga f_{\rho,j})_\rho^T 
= ( (F_\ga f)_\rho e_je_j^*)^T
= e_je_j^* (F_\ga f_\rho)_\rho^T, $$
which either have rank one (since $\rank(e_je_j^*)=1$),
or are rank zero (when $f_{\rho,j}=0$).
\end{proof}

\begin{example}
Proposition \ref{Malpharankprop}
gives a restriction on the possible rank of a $(G,\ga)$-matrix,
e.g., for $G=D_{4m}$ (the dihedral group of order $4m$) and 
$\ga$ the nontrivial cocycle, all irreducibles have  $d_\rho=2$
(see \S\ref{dihedralgroupsect}), 
and so a $(G,\ga)$-matrix must be of even rank.
\end{example}

There is some interest in subspaces (in our case subalgebras)
of matrices with a restriction on their rank \cite{DGMS10}.


\section{The characterisation of tight $(G,\ga)$-frames.}
\label{Gframecharchap}

We now give the main application of our results: a simple characterisation 
and explicit construction of all tight $(G,\ga)$-frames.
Let $R$ be a complete set of irreducible unitary representations for a (unitary) cocycle $\ga$. With very little effort we have:

\begin{lemma}
\label{Galphatightlemma}
Let  $P=M_\ga(\nu)$ be a $(G,\ga)$-matrix. Then the following are equivalent
\begin{enumerate}
\item 
 $P$ is the Gramian of a normalised tight $(G,\ga)$-frame.
\item $P$ is an orthogonal projection.
\item Each $F_\ga(\nu)_\rho$, $\rho\in R$, is an orthogonal projection.
\end{enumerate}
\end{lemma}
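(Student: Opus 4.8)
The plan is to prove the chain of equivalences $(1)\Leftrightarrow(2)\Leftrightarrow(3)$. The equivalence $(1)\Leftrightarrow(2)$ should follow from the general frame theory already set up: a sequence $(\phi_g)$ is a normalised tight frame precisely when its Gramian $V^*V$ is an orthogonal projection. The tight frame condition (\ref{normalisedtfdefn}) says $VV^*=I$ on the span, which forces $V^*V$ to be idempotent and Hermitian, i.e. an orthogonal projection; conversely any orthogonal projection $P$ in $M_{(G,\ga)}$ factors as $P=V^*V$ with the columns of $V$ forming the tight frame, and the $(G,\ga)$-matrix structure guarantees these columns are the projective orbit $(\rho(g)v)_{g\in G}$. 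I would state this briefly, citing the established fact that the Gramian determines a projective group frame up to unitary equivalence and that tight frames correspond to Gramians that are orthogonal projections (both listed in the introduction).

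The substantive content is $(2)\Leftrightarrow(3)$, and here I would invoke the block-diagonalisation already proved. First I would recall that $P=M_\ga(\nu)$ is an orthogonal projection iff $P^2=P$ and $P^*=P$. By Theorem \ref{Galphamatdiag}, the unitary conjugate $\overline{E}^* M_\ga(\nu)\overline{E}$ is block diagonal with each block $(F_\ga\nu)_\rho^T$ appearing $d_\rho$ times, where $B_\rho=(F_\ga\nu)_\rho$. Since unitary conjugation preserves both the relation $P^2=P$ and the relation $P^*=P$, the matrix $P$ is an orthogonal projection iff the block-diagonal form is, which happens iff every diagonal block $B_\rho^T$ is an orthogonal projection. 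Finally, a matrix $B$ is an orthogonal projection iff its transpose $B^T$ is (transposition commutes with squaring and sends Hermitian matrices to Hermitian matrices, since $(B^T)^*=\overline{B}$ and $B^*=B$ gives $\overline{B}=B^T$). Hence each $B_\rho^T$ is an orthogonal projection iff each $B_\rho=(F_\ga\nu)_\rho$ is, which is exactly statement (3).

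Alternatively, and perhaps more cleanly, I could bypass the transpose bookkeeping by combining Example \ref{FTsquare} with Lemma \ref{FThermitiancdn}. Example \ref{FTsquare} shows $P^2=P$ (equivalently $\nu*_\ga\nu=\nu$) iff each $(F_\ga\nu)_\rho$ is idempotent, using the convolution formula $F_\ga(\nu*_\ga\nu)=F_\ga(\nu)^2$. Lemma \ref{FThermitiancdn} shows $P^*=P$ iff each $(F_\ga\nu)_\rho$ is Hermitian. Together these give that $P$ is an orthogonal projection iff each $(F_\ga\nu)_\rho$ is an orthogonal projection, which is $(2)\Leftrightarrow(3)$ directly. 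This route is the one the excerpt seems to anticipate, since it explicitly remarks just before the definition of $\rho$-functions that ``Example \ref{FTsquare} and Lemmas \ref{rhoFTformula} and \ref{FThermitiancdn} are sufficient to obtain the characterisation of tight $(G,\ga)$-frames.''

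I do not expect any real obstacle here, since all the machinery is in place; the only mild care needed is the translation between the idempotency/Hermiticity conditions on $F_\ga(\nu)_\rho$ and on its transpose in the block form, which the second route sidesteps entirely. The main thing is to present $(1)\Leftrightarrow(2)$ at the right level of generality, relying on the standard frame-theoretic dictionary rather than re-deriving it, and then to let the Fourier-analytic lemmas carry $(2)\Leftrightarrow(3)$ with essentially no computation.
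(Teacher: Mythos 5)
Your proposal is correct, and your second route for $(2)\Leftrightarrow(3)$ --- combining Example \ref{FTsquare} with Lemma \ref{FThermitiancdn}, together with the standard fact that a sequence is a normalised tight frame iff its Gramian is an orthogonal projection for $(1)\Leftrightarrow(2)$ --- is exactly the paper's proof. The block-diagonalisation route you sketch first also works (unitary conjugation and transposition both preserve the orthogonal-projection property), but it is not needed.
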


\begin{proof}
Since a sequence of vectors is a normalised tight frame if and only if
its Gramian is an orthogonal projection, we have the equivalence of
the first two. The condition that $P$ be an orthogonal projection, 
i.e., $P^2=P$ and $P^*=P$, is equivalent to the last condition by
Example \ref{FTsquare} and Lemma \ref{FThermitiancdn}.
\end{proof}

We will refer to $(F_\ga(\nu)_\rho)_{\rho\in R}$ as the {\bf Fourier
coefficients} of $P=M_\ga(\nu)$,
 or of any $(G,\ga)$-frame with Gramian $M_\ga(\nu)$.

\begin{example} If $G$ is abelian and $\ga=1$, then all the irreducibles
have dimension $1$, and so the Fourier coefficients of a normalised 
tight $G$-frame
must be $0$ or $1$. Thus there are a finite number of such $G$-frames
for $G$ abelian, the so called 
{\bf harmonic frames}.
\end{example}

Now we suppose that $\rho:G\to V$ is a unitary action on $V$ (for
a cocycle $\ga$). We now answer the question:
 when is $(\rho(g)v)_{g\in G}$ a normalised tight $(G,\ga)$-frame for $V$? 
The result below is the main structure theorem for tight $(G,\ga)$-frames,
and was first given in \cite{VW05} (Theorem 6.18) for the ordinary case,
and in \cite{CH18} (Theorem 2.11) for the projective case. The proof
given here uses the Fourier coefficients of the frame,
rather than asserting
(\ref{normalisedtfdefn}) for each irreducible,
and gives insight into the result.

\begin{theorem} Let $\rho:G\to V$ be a unitary action on $V$ for
a cocycle $\ga$, and $V=\oplus V_j$ be an orthogonal direct
sum of irreducible $(\CC G)_\ga$-modules. If $v=\sum_j v_j$, $v_j\in V_j$,
then $(gv)_{g\in G}=(\rho(g)v)_{g\in G}$ 
is a normalised tight $(G,\ga)$-frame for $V$
if and only if
$$ \norm{v_j}^2 = {\dim(V_j)\over|G|}, \quad\forall j, $$
and in the case $V_j$ is $(\CC G)_\ga$-isomorphic to $V_k$, $j\ne k$,
via $\gs:V_j\to V_k$ that
$$ \inpro{\gs v_j,v_k}=0. $$
\end{theorem}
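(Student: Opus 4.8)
The plan is to exploit the block-diagonalisation machinery already developed, recasting the tight-frame condition as a statement about the Fourier coefficients $(F_\ga f)_\rho$, where $f$ is the function whose $(G,\ga)$-matrix is the Gramian. First I would compute the Gramian of $(gv)_{g\in G}$ explicitly. By formula (\ref{projGramian}) its $(g,h)$-entry is $\inpro{\rho(g^{-1}h)v,v}/\ga(g,g^{-1}h)$, so the Gramian is $M_\ga(f)$ with $f(a)=\inpro{\rho(a)v,v}=\trace(\rho(a)vv^*)$. By Lemma \ref{Galphatightlemma}, $(gv)$ is a normalised tight frame if and only if each Fourier coefficient $(F_\ga f)_\rho$ is an orthogonal projection. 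So the whole theorem reduces to computing these Fourier coefficients in terms of the $v_j$ and deciding when they are projections.

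The key computation is to evaluate $(F_\ga f)_\rho$ for $f(a)=\trace(\rho(a)vv^*)$, where here $\rho$ denotes the (reducible) action on $V$. Decomposing $V=\oplus_j V_j$ into irreducibles, each $\rho|_{V_j}$ is equivalent to some $\xi\in R$, and $f$ splits as $f=\sum_j f_j$ with $f_j(a)=\inpro{\rho(a)v_j,v_j}$ plus cross terms $\inpro{\rho(a)v_j,v_k}$ for $j\ne k$. The cross terms vanish after summing unless $V_j$ and $V_k$ carry equivalent irreducibles, because by the coordinate orthogonality (\ref{coordorth}) or equivalently (\ref{FTirredA})–(\ref{FTirredGram}), the Fourier transform annihilates matrix coefficients belonging to inequivalent irreducibles. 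Using (\ref{FTirredGram}), the contribution of a single block $V_j\cong\xi$ to $(F_\ga f)_\xi$ is $\tfrac{|G|}{d_\xi}\,w_jw_j^*$, where $w_j$ is the image of $v_j$ under the intertwiner identifying $V_j$ with the model space of $\xi$; inequivalent $\rho$ contribute nothing. Thus $(F_\ga f)_\xi=\tfrac{|G|}{d_\xi}\sum_{j:\,V_j\cong\xi}w_jw_j^*$ (plus cross terms $w_jw_k^*$ from the $\inpro{\rho(a)v_j,v_k}$ when $V_j\cong V_k\cong\xi$).

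With this formula in hand, I would impose the projection condition $(F_\ga f)_\xi^2=(F_\ga f)_\xi$ block by block. For a single irreducible-isotypic block, $(F_\ga f)_\xi=\tfrac{|G|}{d_\xi}\sum w_jw_j^*$ is a sum of rank-one positive operators; requiring it to be an orthogonal projection forces the $w_j$ to be orthonormal up to the scalar, i.e. $\inpro{w_j,w_k}=\tfrac{d_\xi}{|G|}\delta_{jk}$. Unwinding the intertwiner, the diagonal part $j=k$ gives exactly $\norm{v_j}^2=\dim(V_j)/|G|$, and the off-diagonal part $j\ne k$ gives $\inpro{\gs v_j,v_k}=0$ for the isomorphism $\gs$, which is precisely the stated condition. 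The main obstacle is the bookkeeping of the intertwiners: I must verify that $(F_\ga f)_\xi$ is genuinely the Gram matrix (scaled) of the vectors $\{w_j\}$ in the representation space, so that ``orthogonal projection'' translates cleanly into orthonormality, and I must check that the cross terms $w_jw_k^*$ are correctly governed by $\gs$ rather than by an arbitrary intertwiner (Schur's lemma fixes $\gs$ up to scalar, so the condition $\inpro{\gs v_j,v_k}=0$ is well defined). Handling the normalisation constants $\tfrac{|G|}{d_\xi}$ consistently through these identifications is where the calculation is delicate, but once the Fourier coefficient is written as a scaled Gram matrix the equivalence with the projection condition is immediate.
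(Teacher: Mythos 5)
Your overall route is the same as the paper's: reduce to Lemma \ref{Galphatightlemma}, compute the Fourier coefficients of the Gramian via (\ref{FTirredGram}) and the intertwiners $\gs_j$, and read off the conditions from the requirement that each $c_\xi=\sum_{j:\rho_j\approx\xi}w_jw_j^*$ be an orthogonal projection. However, two steps as written are shaky. First, your treatment of the cross terms $\inpro{\rho(a)v_j,v_k}$, $j\ne k$, is off: these vanish \emph{identically}, simply because $\rho(a)V_j=V_j$ and $V_j\perp V_k$ (the decomposition is an orthogonal sum of invariant subspaces), not ``after summing'' by Fourier orthogonality of inequivalent irreducibles. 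The distinction matters, because for $V_j\cong V_k$ your stated mechanism would \emph{not} kill them, and if genuine $w_jw_k^*$ cross terms survived into $(F_\ga f)_\xi$ it would collapse to the rank-one operator $\tfrac{|G|}{d_\xi}\bigl(\sum_j\gs_jv_j\bigr)\bigl(\sum_k\gs_kv_k\bigr)^*$ and your final analysis would be wrong. You silently drop these terms in the last paragraph; the conclusion is saved only because they are zero for the elementary reason above.

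Second, the inference ``$\sum_j w_jw_j^*$ is an orthogonal projection, hence the $w_j$ are orthonormal'' is false without more: take $w_1=w_2=e_1/\sqrt2$, so $w_1w_1^*+w_2w_2^*=e_1e_1^*$ is a projection but the $w_j$ are neither unit nor orthogonal. You must use that the frame is tight for all of $V$, so the Gramian has rank $\dim V=\sum_j\dim V_j$, forcing each $c_\xi$ to have rank equal to the number of summands $j$ with $\rho_j\approx\xi$; only then does the projection condition give $\inpro{w_j,w_k}=\gd_{jk}$. The paper sidesteps this by observing that orthogonal projection onto $V_j$ carries a normalised tight frame for $V$ to one for $V_j$, so each $w_jw_j^*$ is individually a rank-one projection ($\norm{w_j}=1$), after which the projection condition on $c_\xi$ reduces cleanly to pairwise orthogonality. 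With these two repairs your argument coincides with the paper's proof.
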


\begin{proof} By Lemma \ref{Galphatightlemma},
$\Phi:=(gv)_{g\in G}$ is a normalised tight
frame for $V$ 
 if and only if its Fourier coefficients 
are orthogonal projections, 
and the sum of the ranks of these projections
is $\dim(V)$. We now calculate the Fourier coefficients $(c_\xi)_{\xi\in R}$.

Let $\rho_j:V_j\to V_j$ be the irreducible representation 
given by $\rho_j(g):=\rho(g)|_{V_j}$,
and let $\gs_j:V_j\to V_\xi$ be 
a unitary $(\CC G)_\ga$-isomorphism to the $\xi:G\to\GL(V_\xi)$ in $R$ 
with 
$\rho_j\approx\xi$. 
We note that $\xi=\gs_j\rho_j\gs_j^{-1}$.
By the orthogonality of the $V_j$, the Gramian of $(gv)_{g\in G}$
is the sum of the Gramians $M_\ga(f_j)$ of the $(G,\ga)$-frames
$\Phi_j:=(\rho_j(g)v_j)_{g\in G}$ for $V_j$, where
\begin{align*}
f_j(g) 
&:= \inpro{\rho_j(g)v_j,v_j}
= \trace(\rho_j(g)v_jv_j^*)
= \trace(\gs_j\rho_j(g)\gs_j^{-1} \gs_j v_j (\gs_j v_j)^*) \cr
&= \trace(\xi(g) \gs_j v_j (\gs_j v_j)^*). 
\end{align*}
It follows from (\ref{FTirredGram}) that the Fourier coefficients of $\Phi_j$
(given by the $\xi$-function $f_j$) are
$$ (F_\ga f_j)_\eta 
 = 
 \begin{cases}
0, & \eta\not\approx\xi; \cr
w_jw_j^*, & \eta=\xi,
\end{cases}
\qquad w_j := \hbox{$\sqrt{|G|\over d_\xi}$} \gs_jv_j\quad
(\rho_j\approx\xi), $$
and hence the Fourier coefficients of $\Phi$ are
$$ c_\xi := \sum_{j:\rho_j\approx\xi} w_jw_j^*. $$

If $\Phi$ is a normalised tight frame for $V$, then each $\Phi_j$ must
be one for 
$V_j$ (since orthogonal projections map
normalised tight frames to normalised tight frames), i.e., $w_jw_j^*$
is a rank one orthogonal projection, which gives
$$ \norm{w_j}^2= {|G|\over d_\xi} \norm{\gs_j v_j}^2 = {|G|\over d_\xi} \norm{v_j}^2=1
\Iff \norm{v_j}^2={\dim(V_j)\over|G|}. $$
Finally, for $c_\xi$ to be an orthogonal projection, we need 
$w_j\perp w_k$, $j\ne k$, $\rho_j,\rho_k\approx\xi$, i.e.,
$$ w_j\perp w_k 
\Iff \gs_j v_j \perp \gs_k v_k
\Iff \gs_k^{-1}\gs_j v_j \perp v_k
\Iff \gs v_j \perp v_k, $$
where, by Schur's lemma, we can replace the $(\CC G)_\ga$-isomorphism 
$\gs_k^{-1}\gs_j:V_j\to V_k$ above by any other one $\gs$.
\end{proof}

There a natural description of the various classes of $G$-frames in \cite{W18}
(and their generalisation to $(G,\ga)$-frames) in terms of the 
Fourier coefficients, e.g.,

\begin{itemize}
\item {\it Irreducible $(G,\ga)$-frames}: There is only one nonzero
Fourier coefficent, which is a rank one orthogonal projection 
(up to a scalar multiple).
\item {\it Homogeneous $(G,\ga)$-frames}: There is only one nonzero
Fourier coefficent. \\
Equivalently, the Gramian is a $\rho$-matrix for some irreducible $\rho\in R$.
\item {\it Central $(G,\ga)$-frames}: All the Fourier coefficients are
$0$ or a scalar multiple of  $I$.
\end{itemize}

\begin{example} 
\label{CentralGramianex}
A $(G,\ga)$-frame with Gramian
$M_\ga(f)$ is 
{\bf central} if $f$ is a $\ga$-class function.
Since the irreducible $\ga$-characters are a basis for the $\ga$-class
functions, $f$ must have the form 
$$ f(g) = \sum_\rho a_\rho \chi_\rho(g)
= \sum_\rho a_\rho \trace(\rho(g)I), $$
and so, by (\ref{FTirredA}),
the Fourier coefficients of the frame are
$ (F_\ga f)_\rho = {|G|\over d_\rho} a_\rho I.$
For these to be orthogonal projections, we must have
$$a_\rho = {d_\rho\over|G|}={\chi_\rho(1)\over\ga(1,1)|G|}, $$
which, by taking $a_\rho=0$ or the above value, 
gives the characterisation of \cite{VW08} and \cite{CH18} for
normalised tight central $(G,\ga)$-frames in terms of their Gramian.
\end{example}

\vfil\eject

\begin{theorem}
\label{constructthm}
(Construction) Let $M_\ga(f)$ be an orthogonal projection,
i.e., the Gramian of some normalised tight
$(G,\ga)$-frame. 
Write its Fourier coefficients (which are rank $m_\xi$ orthogonal
projections) as
$$ (F_\ga f)_\xi = \sum_{j=1}^{m_\xi} w_{\xi,j}w_{\xi,j}^*,
\qquad w_\xi=(w_{\xi,1},\ldots,w_{\xi,m_\xi})\in (\CC^{d_\xi})^{m_\xi},
$$
where $\inpro{w_{\xi,j},w_{\xi,k}}=\gd_{jk}$. 
Let 
\begin{equation}
\label{vdefnconstruction}
v:=(\hbox{$\sqrt{d_\xi\over|G|}$}w_\xi)_{\xi\in R}\in V
:=\bigoplus_{\xi\in R} (\CC^{d_\xi})^{m_\xi}.
\end{equation}
Then $(\rho(g)v)_{g\in G}$ is a normalised tight $(G,\ga)$-frame for $V$ with
Gramian $M_\ga(f)$, where the unitary action $\rho:G\to\GL(V)$ is given by
$$ 
\rho(g)\bigl( (v_{\xi,1},\ldots,v_{\xi,m_\xi})_{\xi\in R}\bigr)
:=  (\xi(g)v_{\xi,1},\ldots,\xi(g)v_{\xi,m_\xi})_{\xi\in R}. $$
\end{theorem}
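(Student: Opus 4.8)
The plan is to verify the claim directly by computing the Fourier coefficients of the constructed frame $(\rho(g)v)_{g\in G}$ and checking that they agree with the given data $(F_\ga f)_\xi$, then invoking Lemma~\ref{Galphatightlemma} to conclude both that the frame is normalised tight and that its Gramian is $M_\ga(f)$. The key observation is that the action $\rho$ on $V=\bigoplus_\xi(\CC^{d_\xi})^{m_\xi}$ is block-diagonal: it acts on each of the $m_\xi$ copies of $\CC^{d_\xi}$ by the irreducible $\xi$. Consequently the decomposition $V=\bigoplus_\xi\bigoplus_{j=1}^{m_\xi}V_{\xi,j}$ (with $V_{\xi,j}$ the $j$th copy of $\CC^{d_\xi}$) is an orthogonal direct sum of irreducible $(\CC G)_\ga$-modules, and each $V_{\xi,j}$ is already equal to the representation $\xi\in R$ itself, so the isomorphisms $\gs_j$ appearing in the structure theorem are simply identities.

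First I would compute the Fourier coefficient at each $\eta\in R$. Writing $v=\sum_{\xi,j}v_{\xi,j}$ with $v_{\xi,j}=\sqrt{d_\xi/|G|}\,w_{\xi,j}\in V_{\xi,j}$, the Gramian of $(\rho(g)v)_{g\in G}$ is the sum over all $(\xi,j)$ of the Gramians of the homogeneous frames $(\xi(g)v_{\xi,j})_{g\in G}$, exactly as in the proof of the structure theorem above. Each such summand is $M_\ga(f_{\xi,j})$ with $f_{\xi,j}(g)=\inpro{\xi(g)v_{\xi,j},v_{\xi,j}}=\trace(\xi(g)v_{\xi,j}v_{\xi,j}^*)$, and by (\ref{FTirredGram}) its only nonzero Fourier coefficient is at $\eta=\xi$, equal to $(|G|/d_\xi)\,v_{\xi,j}v_{\xi,j}^*=w_{\xi,j}w_{\xi,j}^*$. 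Summing over $j$, the Fourier coefficient of the whole frame at $\eta$ is therefore
\begin{equation*}
(F_\ga f')_\eta=\sum_{j=1}^{m_\eta}w_{\eta,j}w_{\eta,j}^*=(F_\ga f)_\eta,
\end{equation*}
where $f'$ denotes the function $g\mapsto\inpro{\rho(g)v,v}$ defining the Gramian of the constructed frame.

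Since a $(G,\ga)$-matrix is determined by its function $\nu\in\CC^G$, and the function is in turn determined by its Fourier coefficients (the $\ga$-Fourier transform being invertible), the equality of Fourier coefficients forces $f'=f$, so the constructed frame has Gramian $M_\ga(f)$. It remains to confirm it is normalised tight: by hypothesis each $(F_\ga f)_\xi=\sum_j w_{\xi,j}w_{\xi,j}^*$ with $\inpro{w_{\xi,j},w_{\xi,k}}=\gd_{jk}$ is an orthogonal projection of rank $m_\xi$, so all Fourier coefficients of the frame are orthogonal projections, and condition (3) of Lemma~\ref{Galphatightlemma} holds. I expect the main obstacle to be purely bookkeeping rather than conceptual: one must keep the two indices $(\xi,j)$ straight and confirm that the orthonormality $\inpro{w_{\xi,j},w_{\xi,k}}=\gd_{jk}$ built into the hypothesis is precisely what makes $c_\xi$ a projection (not merely a positive operator), matching the $\gs v_j\perp v_k$ condition of the structure theorem with $\gs=\Id$. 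No delicate estimate or nontrivial choice is required, because the construction is engineered so that the irreducible modules $V_{\xi,j}$ are literally the members of $R$ and the spectral decomposition of each projection $(F_\ga f)_\xi$ supplies the vectors directly.
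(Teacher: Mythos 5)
Your proposal is correct and follows essentially the same route as the paper: both compute the Fourier coefficients of the constructed frame's Gramian function via (\ref{FTirredGram}) (equivalently, the orthogonality of coordinates), find they equal $(F_\ga f)_\xi=\sum_j w_{\xi,j}w_{\xi,j}^*$, and conclude by invertibility of $F_\ga$ that the Gramian is $M_\ga(f)$. Your explicit appeal to Lemma~\ref{Galphatightlemma} for tightness is a minor (and harmless) addition to what the paper leaves implicit.
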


\begin{proof} 
Let $M_\ga(\nu)$ be the Gramian of $(\rho(g)v)_{g\in G}$, i.e.,
$$ \nu(g):=\inpro{\rho(g)v,v}
= \sum_{\xi\in R}\sum_{j=1}^{m_\xi} \inpro{\xi(g)w_{\xi,j},w_{\xi,j}}. $$
Then by the orthogonality of coordinates (\ref{coordorth}), 
and (\ref{FTirredGram}),
we calculate
\begin{align*}
(F_\ga\nu)_\eta 
&= \sum_{a\in G}\sum_{\xi\in R}{d_\xi\over|G|}\sum_{j=1}^{m_\xi}
\inpro{\xi(a)w_{\xi,j},w_{\xi,j}}\eta(a)^*
= \sum_{a\in G} {d_\xi\over|G|}\sum_{j=1}^{m_\eta}
\inpro{\eta(a)w_{\eta,j},w_{\eta,j}}\eta(a)^* \cr
&= \sum_{j=1}^{m_\eta} \sum_{a\in G} 
\trace( \eta(a) {d_\xi\over|G|} w_{\eta,j}w_{\eta,j}^*)\eta(a)^*
=\sum_{j=1}^{m_\eta} w_{\eta,j}w_{\eta,j}^*
=(F_\ga f)_\eta,
\end{align*}
as claimed.
\end{proof}

\begin{example}
Consider the normalised tight central $(G,\ga)$-frame with Gramian
$$ M_\ga\Bigl(\sum_{\xi\in S}{ d_\xi\over |G|}\chi_\xi\Bigr), 
\qquad d_\xi={\chi_\xi(1)\over\ga(1,1)}, $$
where $S\subset R$ (see Example \ref{CentralGramianex}).
Its nonzero Fourier coefficients are $I$ for $\xi\in S$.
By writing these as $I=\sum_j e_je_j^*$, we can realise this frame as 
$(\phi_g)_{g\in G}$, where
$$ \phi_g := \bigl( \hbox{$\sqrt{d_\xi\over|G|}$} 
\xi_{jk}(g)\bigr)_{\xi\in S, 1\le j,k\le d_\rho}.$$
This can be written compactly as $(\sqrt{d_\xi\over|G|}\xi)_{\xi\in S}
\in \bigoplus_{\xi\in S} \CC^{d_\xi\times d_\xi}$, with the
Frobenius inner product on $\CC^{d_\xi\times d_\xi}$.
\end{example}

A square matrix is the Gramian of a frame
(spanning sequence for a vector space) if and only if it is
positive semidefinite. Thus a $(G,\ga)$-matrix $M_\ga(f)$ is
the Gramian of a $(G,\ga)$-frame if and only if its Fourier 
coefficients are positive semidefinite.
Each such Fourier coefficient can be unitarily diagonalised,
giving 
$(F_\ga f)_\xi=\sum_j \gl_j w_{\xi,j}w_{\xi,j}^*,$
where the $w_{\xi,j}$ are orthonormal and $\gl_j>0$.
The frame can be realised as in Theorem \ref{constructthm},
where $w_\xi$ in (\ref{vdefnconstruction})
is replaced by
$(\sqrt{\gl_j}w_{\xi,j})_{1\le j\le m_\xi}$, 
and $m_\xi=\rank((F_\ga f)_\xi)$.


\section{Examples}

We now give some examples of $(G,\ga)$-matrices, their block 
diagonalisations (factorisation of the determinant), 
and Fourier decompositions.

\subsection{The Klein four-group}

The first group with a nontrivial Schur multipler is
the Klein four-group $G=\ZZ_2\times\ZZ_2$. 
We order this $1,a,b,ab=(0,0),(1,0),(0,1),(1,1)$, and write
$\nu(j,k)=\nu_{jk}$. 

For $\ga=1$, there are four
one-dimensional representations, giving
$$ M_1(\nu) = \pmat{ 
\nu_{00}&\nu_{10}&\nu_{01}&\nu_{11} \cr
\nu_{10}&\nu_{00}&\nu_{11}&\nu_{01} \cr
\nu_{01}&\nu_{11}&\nu_{00}&\nu_{10} \cr
\nu_{11}&\nu_{01}&\nu_{10}&\nu_{00} },
\qquad
E={1\over 2}\pmat{ 1&1&1&1 \cr 1&-1&1&-1 \cr 1&1&-1&-1 \cr 1&-1&-1&1},
 $$
with $\overline{E}^* M_1(\nu)\overline{E}$ being diagonal,
and
$$ \det(M_1(\nu)) = (\nu_{00}+\nu_{10}+\nu_{01}+\nu_{11})
(\nu_{00}-\nu_{10}+\nu_{01}-\nu_{11})
(\nu_{00}+\nu_{10}-\nu_{01}-\nu_{11})
(\nu_{00}-\nu_{10}-\nu_{01}+\nu_{11}). $$

For the nontrivial multiplier, we have $(G,\ga)$-matrices
$$ M_\ga(\nu) = \pmat{ 
\nu_{00}&\nu_{10}&\nu_{01}&\nu_{11} \cr
\nu_{10}&\nu_{00}&\nu_{11}&\nu_{01} \cr
\nu_{01}&-\nu_{11}&\nu_{00}&-\nu_{10} \cr
-\nu_{11}&\nu_{01}&-\nu_{10}&\nu_{00} },\quad
\ga:=\pmat{ 1&1&1&1\cr 1&1&1&1\cr 1&-1&1&-1\cr 1&-1&1&-1},  $$
and a single two-dimensional projective representation $\rho$ for $\ga$.
This representation, and a $\tilde\rho$ equivalent 
to it, are given by
$$ \rho(1)=\pmat{1&0\cr0&1},\quad
 \rho(a)=\pmat{0&1\cr1&0},\quad
 \rho(b)=\pmat{1&0\cr0&-1},\quad
 \rho(ab)=\pmat{0&-1\cr1&0}, $$
$$ \tilde\rho(1)=\pmat{1&0\cr0&1},\quad
 \tilde\rho(a)=\pmat{1&0\cr0&-1},\quad
 \tilde\rho(b)=\pmat{0&1\cr1&0},\quad
 \tilde\rho(ab)=\pmat{0&1\cr-1&0}. $$
where $\tilde\rho=T\rho T^{-1}$, $T={1\over\sqrt{2}}\pmat{1&1\cr1&-1}$.
For these, we have
$$ E_{\rho} = {1\over\sqrt{2}}\pmat{1&0&0&1\cr0&1&1&0\cr1&0&0&-1\cr0&-1&1&0}, 
\qquad
E_{\tilde\rho} 
= {1\over\sqrt{2}}\pmat{1&0&0&1\cr1&0&0&-1\cr0&1&1&0\cr0&1&-1&0}.$$
This shows that invariant subspace orthogonal decompositions 
$$ 
U_{\rho,\ga} = \bigoplus_{j=1}^{d_\rho} U_{\rho,\ga,j}, \qquad
V_{\rho,\ga} = \bigoplus_{j=1}^{d_\rho} V_{\rho,\ga,j}
 $$
are not unique.
\vfil\eject

\noindent
Here
$$ \overline{E_\rho}^* M_\ga(\nu)\overline{E_\rho}
=\pmat{ C_\rho &0\cr 0 & C_\rho}, \qquad C_\rho=(F_\ga\nu)_\rho^T
=\pmat{\nu_{00}+\nu_{01}&\nu_{10}-\nu_{11}\cr \nu_{10}+\nu_{11}&\nu_{00}-\nu_{01} } $$
$$ \overline{E_{\tilde\rho}}^* M_\ga(\nu)\overline{E_{\tilde\rho}}
=\pmat{ C_{\tilde\rho} &0\cr 0 & C_{\tilde\rho}}, 
\qquad C_{\tilde\rho}=(F_\ga\nu)_{\tilde\rho}^T
=\pmat{ \nu_{00}+\nu_{10}&\nu_{01}+\nu_{11} \cr
\nu_{01}-\nu_{11}&\nu_{00}-\nu_{10}}, $$
and
$$ \det(M_\ga(\nu)) = (\nu_{00}^2 +\nu_{11}^2
-\nu_{10}^2-\nu_{01}^2)^2. $$
We have a fine-scale Fourier decomposition of 
$M_\ga(\nu)$ into $(G,\ga)$-matrices
$$ M_\ga(\nu) = M_\ga(\nu_{\rho,1})+M_\ga(\nu_{\rho,2}), $$
where
$$ \nu_{\rho,1}={1\over2}(\nu_{00}+\nu_{01},\nu_{10}-\nu_{11},
\nu_{00}+\nu_{01},\nu_{11}-\nu_{10}), $$
$$ \nu_{\rho,2}={1\over2}(\nu_{00}-\nu_{01},\nu_{10}+\nu_{11},
\nu_{01}-\nu_{00},\nu_{10}+\nu_{11}). $$
%
The summands lie in the corresponding subalgebras of the $(G,\ga)$-matrices
$$ 
M_{\rho,\ga,1} =\Bigl\{ \pmat{ a&b&a&-b\cr b&a&-b&a\cr a&b&a&-b\cr b&a&-b&a}:a,b\in\CC\Bigr\}, \quad
M_{\rho,\ga,2} =\Bigl\{ \pmat{ c&d&-c&d\cr d&c&d&-c\cr -c&-d&c&-d\cr -d&-c&-d&c}:c,d\in\CC\Bigr\}, $$
for which every nonzero matrix has rank $2$.
By Proposition \ref{Malpharankprop}, we have
that there are no $(G,\ga)$-matrices of rank $1$ or $3$.

\subsection{The dihedral groups}
\label{dihedralgroupsect}

The next groups with nontrivial Schur multiplier are those of
order $8$, of which $\ZZ_2\times\ZZ_4$ and $D_8$ have Schur 
multiplier of order $2$, and $\ZZ_2^3$ which has Schur multiplier of
order $8$. We consider $G=D_8=\inpro{a,b:a^4=1,b^2=1,bab=a^{-1}}$.
In \cite{CH18}, it is shown that
for the nontrivial cocycle $\ga$ given by
$$ \ga( a^jb^k,a^\ell b^m) := i^{k\ell}, $$
there are inequivalent $2$-dimensional projective representations 
$\rho_1$ and $\rho_2$ 
for $\ga$ given by
$$ \rho_r (a^j b^k ) := \pmat{i^r&0\cr0& i^{1-r}}^j\pmat{0&1\cr1&0}^k. $$
We use the ordering $1,a,a^2,a^3,b,ab,a^2b,a^3b$ for $G$, so that
$$ M_\ga(\nu)= \pmat{ 
\nu_{1}&\nu_{a}&\nu_{a^2}&\nu_{a^3}&\nu_{b}&\nu_{ab}&\nu_{a^2b}&\nu_{a^3b} \cr
\nu_{a^3}&\nu_{1}&\nu_{a}&\nu_{a^2}&\nu_{a^3b}&\nu_{b}&\nu_{ab}&\nu_{a^2b} \cr
\nu_{a^2}&\nu_{a^3}&\nu_{1}&\nu_{a}&\nu_{a^2b}&\nu_{a^3b}&\nu_{b}&\nu_{ab} \cr
\nu_{a}&\nu_{a^2}&\nu_{a^3}&\nu_{1}&\nu_{ab}&\nu_{a^2b}&\nu_{a^3b}&\nu_{b} \cr
\nu_{b}&i\nu_{a^3b}&-\nu_{a^2b}&-i\nu_{ab}&\nu_{1}&i\nu_{a^3}&-\nu_{a^2}&-i\nu_{a} \cr
-i\nu_{ab}&\nu_{b}&i\nu_{a^3b}&-\nu_{a^2b}&-i\nu_{a}&\nu_{1}&i\nu_{a^3}&-\nu_{a^2} \cr
-\nu_{a^2b}&-i\nu_{ab}&\nu_{b}&i\nu_{a^3b}&-\nu_{a^2}&-i\nu_{a}&\nu_{1}&i\nu_{a^3} \cr
i\nu_{a^3b}&-\nu_{a^2b}&-i\nu_{ab}&\nu_{b}&i\nu_{a^3}&-\nu_{a^2}&-i\nu_{a}&\nu_{1} 
}, $$
and, e.g.,
$$ \rho_1=\bigl( 
\pmat{1&0\cr0&1},\pmat{i&0\cr0&1}, \pmat{-1&0\cr0&1},\pmat{-i&0\cr0&1},
\pmat{0&1\cr1&0},\pmat{0&i\cr1&0},\pmat{0&-1\cr1&0},\pmat{0&-i\cr1&0}
\bigr). $$
Here
\begin{equation}
\label{Eexamp}
E:={1\over2}\pmat{
1&0&0&1&1&0&0&1 \cr
i&0&0&1&-1&0&0&-i \cr
-1&0&0&1&1&0&0&-1 \cr
-i&0&0&1&-1&0&0&i \cr
0&1&1&0&0&1&1&0 \cr
0&i&1&0&0&-1&-i&0 \cr
0&-1&1&0&0&1&-1&0 \cr
0&-i&1&0&0&-1&i&0 },
\end{equation}
and $\overline{E}^*M_\ga(\nu)\overline{E}$ is block diagonal,
with $2\times 2$ diagonal blocks
$(F_\ga \nu)_{\rho_1}^T,(F_\ga \nu)_{\rho_1}^T,
(F_\ga \nu)_{\rho_2}^T,(F_\ga \nu)_{\rho_2}^T$, where
$$ (F_\ga \nu)_{\rho_1} = \pmat{
 \nu_1-i\nu_a-\nu_{a^2}+i\nu_{a^3} & \nu_b+\nu_{ab}+\nu_{a^2b}+\nu_{a^3b} \cr
\nu_b-i\nu_{ab}-\nu_{a^2b}+i\nu_{a^3b}  & \nu_1+\nu_a+\nu_{a^2}+\nu_{a^3}  
}, $$
$$ (F_\ga \nu)_{\rho_2} = \pmat{
 \nu_1-\nu_a+\nu_{a^2}-\nu_{a^3} & \nu_b+i\nu_{ab}-\nu_{a^2b}-i\nu_{a^3b} \cr
\nu_b-\nu_{ab}+\nu_{a^2b}-\nu_{a^3b}  & \nu_1+i\nu_a-\nu_{a^2}-i\nu_{a^3}  
}. $$
Thus $\det(M_\ga(\nu))$ factors 
as $\det((F_\ga \nu)_{\rho_1})^2\det((F_\ga \nu)_{\rho_2})^2$. 
It also happens that ${E}^*M_\ga(\nu){E}$ is block diagonal,
i.e., the $U_{\rho,\ga,j}$ are invariant subspaces of the the
$(G,\ga)$-matrices. This is because conjugation permutes the subspaces
$U_{\rho,\ga,j}$, i.e.,
from (\ref{Eexamp}) it is apparent that
\begin{equation}
\label{UrhoalphaVrhoalpharelat}
V_{\rho_1,\ga,1} 
= U_{\rho_2,\ga,2}, \quad
V_{\rho_1,\ga,2} 
= U_{\rho_1,\ga,2}, \quad
V_{\rho_2,\ga,1} 
= U_{\rho_2,\ga,1}, \quad
V_{\rho_2,\ga,2} 
= U_{\rho_1,\ga,1}. 
\end{equation}
The diagonal blocks of ${E}^*M_\ga(\nu){E}$ are
$$\pmat{
\nu_1+i\nu_a-\nu_{a^2}-i\nu_{a^3}&\nu_b+i\nu_{ab}-\nu_{a^2b}-i\nu_{a^3b}\cr
\nu_b-\nu_{ab}+\nu_{a^2b}-\nu_{a^3b}&\nu_1-\nu_a+\nu_{a^2}-\nu_{a^3}},
\pmat{
\nu_1-i\nu_a-\nu_{a^2}+i\nu_{a^3}&\nu_b-i\nu_{ab}-\nu_{a^2b}+i\nu_{a^3b}\cr
\nu_b+\nu_{ab}+\nu_{a^2b}+\nu_{a^3b}&\nu_1+\nu_a+\nu_{a^2}+\nu_{a^3}}, $$
$$\pmat{
\nu_1-\nu_a+\nu_{a^2}-\nu_{a^3}&\nu_b-\nu_{ab}+\nu_{a^2b}-\nu_{a^3b} \cr
\nu_b+i\nu_{ab}-\nu_{a^2b}-i\nu_{a^3b}&\nu_1+i\nu_a-\nu_{a^2}-i\nu_{a^3} },
\pmat{
\nu_1+\nu_a+\nu_{a^2}+\nu_{a^3}&\nu_b+\nu_{ab}+\nu_{a^2b}+\nu_{a^3b}\cr
\nu_b-i\nu_{ab}-\nu_{a^2b}+i\nu_{a^3b}&\nu_1-i\nu_a-\nu_{a^2}+i\nu_{a^3} }. $$
We observe that these are all different.

\section{Other $(G,\ga)$-matrices}
\label{otherGgamats}

The definition (\ref{Malphanuformula}) for $M_\ga(\nu)$ and 
that of the Fourier transform $F_\ga$ are motivated by our 
analysis of projective group frames. Since these notions are so new,
we now provide the tools to compare the variants. As the theory 
evolves, perhaps it will become apparent if there are ones which are best.

In (\ref{projGramian}),
we define the Gramian of $(gv)_{g\in G}$ so that it factors
$V^*V$, where $V=[gv]_{g\in G}$. 
In \cite{CH18} the transpose (or equivalently the complex conjugate)
of this is considered (for unitary representations), 
i.e., the matrix with $(g,h)$-entry
$$ \inpro{\phi_g,\phi_h} 
=\overline{\inpro{\phi_h,\phi_g}}
= {\inpro{v,\rho(g^{-1}h)v}\over\overline{\ga}(g,g^{-1}h)}
= \ga(g,g^{-1}h) \inpro{v,\rho(g^{-1}h)v}. $$
For $\ga$ unitary or not, we say that $A\in\CC^{G\times G}$ a 
{\bf $[G,\ga]$-matrix} if it has this form, i.e.,
\begin{equation}
\label{[G,alpha]matdef}
a_{g,h} = M(\nu)_{g,h} 
:= \ga(g,g^{-1}h) \nu(g^{-1}h) , \qquad \nu\in\CC^G. 
\end{equation}
In \cite{CH18}, the formula (\ref{[G,alpha]matdef}) 
is written as
\begin{equation}
\label{Mnualtdef}
M(\nu)_{g,h} =\ga(g,g^{-1}h)\nu(g^{-1}h)
= { \ga(g,g^{-1})\ga(1,1) \over \ga(g^{-1},h) } \nu(g^{-1}h). 
\end{equation}
We observe that $1/\ga$ is cocycle, and that a $[G,\ga]$-matrix
is a $(G,1/\ga)$-matrix, i.e.,
\begin{equation}
\label{Malphainv}
M(\nu) = M_{1/\ga}(\nu). 
\end{equation}
Moreover, for $\ga$ unitary, the complex conjugate of a $(G,\ga)$-matrix is a $[G,\ga]$-matrix, i.e.,
\begin{equation}
\label{Gmatcompconj}
\overline{M_\ga(\nu)} = M(\overline{\nu}).
\end{equation}

\begin{proposition} 
The $(G,\ga)$-matrices and $[G,\ga]$-matrices are 
the transposes of each other, i.e.,
$$ M_\ga(\nu)^T= M(\mu), \qquad
\mu(g):={\nu(g^{-1})\over\ga(1,1)\ga(g,g^{-1})}, $$
$$ M(\nu)^T= M_\ga(\mu), \qquad
\mu(g):= \ga(1,1)\ga(g,g^{-1}) \nu(g^{-1}). $$
\end{proposition}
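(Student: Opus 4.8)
The plan is to reduce both displayed identities to a single cocycle relation, and then to verify that relation from the associativity rule~(\ref{alpharule}) together with the normalisation~(\ref{alphag1}). First I would expand the transpose entrywise. By~(\ref{Malphanuformula}),
$$ (M_\ga(\nu)^T)_{g,h} = M_\ga(\nu)_{h,g} = {\nu(h^{-1}g)\over\ga(h,h^{-1}g)}, $$
whereas the candidate right-hand side, using~(\ref{[G,alpha]matdef}) and $\mu(g)=\nu(g^{-1})/(\ga(1,1)\ga(g,g^{-1}))$ together with $(g^{-1}h)^{-1}=h^{-1}g$, has $(g,h)$-entry
$$ M(\mu)_{g,h} = \ga(g,g^{-1}h)\,\mu(g^{-1}h) = {\ga(g,g^{-1}h)\,\nu(h^{-1}g)\over\ga(1,1)\,\ga(g^{-1}h,h^{-1}g)}. $$
Cancelling the common factor $\nu(h^{-1}g)$ leaves exactly the scalar identity
$$ \ga(g,g^{-1}h)\,\ga(h,h^{-1}g) = \ga(1,1)\,\ga(g^{-1}h,h^{-1}g). $$

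The heart of the argument, and the step I expect to be the only real obstacle, is this cocycle identity. The key observation is that $g^{-1}h$ and $h^{-1}g$ are mutually inverse: setting $u:=g^{-1}h$, so that $u^{-1}=h^{-1}g$ and $h=gu$, the claim becomes $\ga(g,u)\,\ga(gu,u^{-1}) = \ga(1,1)\,\ga(u,u^{-1})$. This is precisely~(\ref{alpharule}) with $(x,y,z)=(g,u,u^{-1})$, which yields $\ga(g,u)\,\ga(gu,u^{-1}) = \ga(g,uu^{-1})\,\ga(u,u^{-1}) = \ga(g,1)\,\ga(u,u^{-1})$; the normalisation $\ga(g,1)=\ga(1,1)$ from~(\ref{alphag1}) then completes it.

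For the second formula I would proceed in the same way: the transpose entry $M(\nu)_{h,g}=\ga(h,h^{-1}g)\,\nu(h^{-1}g)$ is matched against $M_\ga(\mu)_{g,h}=\mu(g^{-1}h)/\ga(g,g^{-1}h)$ for $\mu(g)=\ga(1,1)\ga(g,g^{-1})\nu(g^{-1})$, and after cancelling $\nu(h^{-1}g)$ this reduces to the identical cocycle relation above. Alternatively, I would note that the two assignments $\nu\mapsto\mu$ in the statement are mutually inverse maps on $\CC^G$ (here one uses $\ga(g,g^{-1})=\ga(g^{-1},g)$ from~(\ref{alphag1})), so that the second identity follows from the first simply by transposing both sides and substituting. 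Either route needs no computation beyond the single relation already established.
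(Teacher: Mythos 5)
Your proposal is correct and follows essentially the same route as the paper: an entrywise comparison of the transpose that reduces to the single cocycle identity $\ga(g,g^{-1}h)\,\ga(h,h^{-1}g)=\ga(g,1)\,\ga(g^{-1}h,h^{-1}g)$, which the paper states without derivation and you verify explicitly from (\ref{alpharule}) and (\ref{alphag1}). Your observation that the second identity follows from the first because the two maps $\nu\mapsto\mu$ are mutually inverse is exactly the ``change of variables'' the paper alludes to.
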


\begin{proof} 
Since $\ga(g,g^{-1}h)\ga(h,h^{-1}g) = \ga(g,1)\ga(g^{-1}h,h^{-1}g),$
we calculate
$$ (M_\ga(\nu)^T)_{g,h} = M_\ga(\nu)_{h,g}
= {\nu(h^{-1}g)\over\ga(h,h^{-1}g)} 
= \ga(g,g^{-1}h){\nu((g^{-1}h)^{-1})\over\ga(1,1)\ga(g^{-1}h,(g^{-1}h)^{-1})}.
$$
The other follows similarly, or by a change of variables.
\end{proof}

\begin{example}
Taking the transpose of the diagonalisation of
Theorem \ref{Galphamatdiag} gives
$$ E^* M_\ga(\nu)^T E
= \overline{E}^T M_\ga(\nu)^T (\overline{E}^*)^T
= \diag\bigl( (F_\ga \nu)_\rho : \rho\in R,1\le k\le d_\rho\bigr), $$
so that the $U_{\rho,\ga,j}$ are invariant subspaces of $M_\ga(\nu)^T$,
and hence of the $[G,\ga]$-matrices.
\end{example}

Group matrices can also be defined to have $(g,h)$-entries
of the form $\nu(gh^{-1})$ \cite{J18}. To transform these to matrices of the
above type, we consider the unitary involution $J$ given by
$Je_h:= e_{h^{-1}}$, i.e., $(J)_{g,h}=\gd_{g,h^{-1}}$. Then
$$ (JAJ)_{g,h} = a_{g^{-1},h^{-1}}, $$
and so for $A=M_\ga(\nu)$ and $A=M(\nu)$, we have
$$ (JM_\ga(\nu)J)_{g,h} = {\nu(gh^{-1})\over\ga(g^{-1},gh^{-1})}, \qquad
(JM(\nu)J)_{g,h} 
=\ga(g^{-1},gh^{-1})\nu(gh^{-1}),
$$
which would provide the natural definitions for $(G,\ga)$-matrices
of this type.

\begin{example}
For ordinary representations, 
Theorem \ref{ordinaryblockdiag} gives
$$ (JE)^* (JM_\ga(\nu)J) (JE) 
= \diag\bigl((F_\ga\nu)_\rho:\rho\in R,1\le k\le d_\rho\bigr), $$
so the ``group matrices'' $JM_\ga(\nu)J$ are block diagonalised by $JE$, which 
has $(\rho,k)$-blocks
$$
\hbox{$\sqrt{d_\rho\over|G|}$} 
[J\rho_{k1},J\rho_{k2},\ldots,J\rho_{kd_\xi}]
=\hbox{$\sqrt{d_\rho\over|G|}$} 
[\overline{\rho_{1k}},\overline{\rho_{2k}},\ldots,\overline{\rho_{d_\xi k}}], $$
since 
$$(J\rho_{jk})_g = \rho_{jk}(g^{-1})
=(\rho(g^{-1}))_{jk}
=(\rho(g)^*)_{jk}
=\overline{\rho_{kj}(g)}
. $$
This is the Theorem 61 of \cite{J18}. 
\end{example}


\bibliographystyle{alpha}
\bibliography{references}
\nocite{*}



\vfil

\end{document}

\end{document}